\documentclass[11pt, reqno]{amsart}
\usepackage{charter}
\usepackage{mathrsfs}

\usepackage{graphicx} 
\usepackage[margin=1in]{geometry}
\usepackage{amsmath,amsthm,amssymb,enumerate,fancyhdr,bbm}
\usepackage{natbib}
\usepackage{hyperref} 
\usepackage{mathtools}
\usepackage{tikz, enumerate, multicol}
\usepackage{amsthm}
\usepackage{color}
\usepackage{graphicx}
\usepackage{subfig}
\usepackage{grffile}
\usepackage{tikz-network}
\usetikzlibrary{graphs}
\usetikzlibrary{automata, positioning, math}
\usetikzlibrary{shapes.geometric, arrows, shapes.misc, arrows.meta}
\tikzset{
cnd/.style={
draw, circle, minimum size=10pt, inner sep=0pt, font=\tiny
}}

\definecolor{vrdg}{RGB}{0,0,0}
\newcommand{\vrdg}[1]{{\color{vrdg}{#1}}}
\hypersetup{colorlinks=true, linkcolor=blue, citecolor=blue, filecolor=blue, urlcolor=blue}
\DeclareMathOperator{\one}{\mathbbm{1}} 

\numberwithin{equation}{section}
\newtheorem{theorem}{Theorem}[section]

\newtheorem{lemma}{Lemma}[section]

\newtheorem{remark}{Remark}[section]

\usepackage{mathtools}

\usepackage{xcolor}

\newcommand{\epsn}{\varepsilon_n}

\begin{document}
\title{The Generalized Friendship Paradox for Eigenvectors}

\author[B. Bhattacharya]{Bishakh Bhattacharya} 
\address{Indian Statistical Institute, 203, B.T. Road, Kolkata.} 
\email{bishakh.rik@gmail.com}
\author[A. Chakrabarty]{Arijit Chakrabarty}
\address{Indian Statistical Institute, 203, B.T. Road, Kolkata.}
\email{arijit.isi@gmail.com}
\author[R. S. Hazra]{Rajat Subhra Hazra}
\address{Mathematical Institute, Leiden University, Einsteinweg 55, 2333 CC Leiden, The Netherlands}
\email{r.s.hazra@math.leidenuniv.nl}

\date{\today}

\begin{abstract}
In this paper, we investigate the generalized friendship paradox for eigenvectors (alternatively called the eigen friendship paradox and abbreviated hereafter as EFP) in the setting of inhomogeneous Erd\H{o}s--R\'enyi random graphs whose edge probabilities are generated by a continuous graphon. We consider the adjacency matrix of the graph and take the entries of the eigenvector corresponding to its largest eigenvalue as the vertex attributes. It was shown in \cite{hazra2026generalized} that the generalized friendship paradox holds in this setting. We study the empirical distribution of the resulting bias values across the vertices and derive its limiting distribution explicitly in terms of the principal eigenvalue and the corresponding eigenfunction of the integral operator whose kernel is the underlying graphon. 
\end{abstract}

\keywords{Graphs \& Random Graphs, Generalized Friendship Paradox, Degree, Weak Convergence in probability, Eigenvalues, Eigenvectors, Graphons, Integral Operators}
\subjclass[2000]{05C80, 60B20, 60B10, 46L54}
\maketitle

\section{Introduction}
“Your friends have more friends than you do” is a phenomenon known in the literature as the friendship paradox. It was first observed by the American sociologist Scott L. Feld (\cite{feld1991your}). Despite its seemingly paradoxical nature, it has since been rigorously established as a mathematical fact (\cite{HdHP2023,cantwell2021friendship}). Broadly speaking, it asserts that in a network of mutual friendships, the average difference between the number of friends of an individual's friends and the individual's own number of friends is always non-negative.

Over the years, the friendship paradox has found numerous applications. In \cite{christakis2010social}, it was employed for the early detection of contagious disease outbreaks by monitoring the friends of randomly sampled individuals, who tend to occupy more central positions in the network and are therefore more likely to become infected at an earlier stage of an outbreak. In \cite{nettasinghe2019your}, the paradox was used to develop an efficient methodology for predicting election outcomes based on opinion polls. In \cite{christakis2008collective}, the authors analyzed social networks consisting of distinct clusters of smokers and non-smokers and found that smoking cessation is better understood as a collective phenomenon rather than an isolated individual decision. A probabilistic perspective on the friendship paradox was developed in \cite{cao2016friendship}, where it was shown that a randomly selected friend of an individual stochastically has more friends than the individual themselves.

Studies on the \emph{Generalized Friendship Paradox} (GFP), which concerns vertex attributes more general than degrees (such as wedges, triangles, and other local graph statistics), have also appeared in the literature. In \cite{cantwell2021friendship,bhattacharya2025triangle}, it was shown that the GFP holds whenever the attribute under consideration is positively correlated with a function of the vertex degrees. In \cite{eom2014generalized}, the GFP was investigated in the context of the scholarly output of researchers using co-authorship networks derived from \emph{Physical Review} journals together with data from Google Scholar. The authors found that the GFP holds for attributes such as publications, collaborations, and citations, indicating that, on average, a researcher's co-authors outperform the researcher with respect to these measures.

In \cite{hodas2013friendship}, the authors put forward the provocative claim, reflected in the title of their paper, that “your friends are more interesting than you are.” Using data from the social network X (formerly Twitter), they supported this claim through a threefold argument: first, that users' friends and followers tend to have more friends and followers than the users themselves (the classical friendship paradox); second, that they are more active; and third, that they generate and receive more viral content. The latter two phenomena are referred to therein as the \emph{Activity Paradox} and the \emph{Virality Paradox}, respectively.

The GFP does not hold universally. Examples illustrating its failure are provided in \cite{bhattacharya2025triangle}, where the paradox was investigated for both deterministic and random graphs with vertex triangle counts as the underlying attribute. The authors showed that the GFP holds for \emph{partially completed star graphs}, but may fail in other cases. In particular, failure of the GFP is exhibited in certain random graph models generated by two-block graphons. 

\cite{HdHP2023} studied the empirical distribution of friendship biases for different sparse random graph models. In \cite{hazra2026multi}, this study was extended to a multi-level setting, where a person's "friends" are replaced by vertices reached after $k$ steps of either a backtracking or non-backtracking random walk on a sparse random graph. The empirical distribution of these $k$-level friendship biases for locally tree-like random graphs are studied here. The authors prove convergence of the empirical distribution as the graph size $n\to\infty$, and /or the exploration depth $k\to\infty$, and also when $k$ grows with $n$, resulting in commuting limits in $n$ and $k$ in the non-back tracking case while the same isn't true in the backtracking exploration--depends on the mixing time of the exploration. \cite{hazra2025friendship} studies the friendship paradox on trees, where vertices are classified according to whether their neighbors have higher or lower average degree. It proves that, in every finite tree, positive vertices are at least as numerous as negative ones. The authors also analyze infinite Galton–Watson trees, deriving the limiting proportions of each vertex type. The results show that the friendship paradox can behave quite differently in random infinite trees depending on the offspring distribution.

Centrality measures have also gained significant attention in the last few decades. The analyses in \cite{page1999pagerank} explored PageRank as a centrality measure, showing its importance in ranking web pages and also its effectiveness in browsing and estimating traffic on the web network. In \cite{dasaratha2020distributions}, several convergence results for Katz--Bonacich and eigenvector centralities were established in the setting of large random networks generated from stochastic block models that are not excessively sparse. In \cite{avella2018centrality}, a number of centrality measures, including degree, eigenvector, Katz, and PageRank centralities, were formulated in the graphon framework, and their asymptotic properties were investigated. More recently, \cite{hazra2026generalized} introduced an analogue of the GFP, termed the \emph{centrality paradox}, and showed that degree, eigenvector, Katz, and PageRank centralities all satisfy this paradox.

The present work brings together the themes of the generalized friendship paradox, eigenvector centrality, and the \emph{centrality paradox} in the setting of inhomogeneous Erd\H{o}s--R\'enyi random graphs. We consider random graphs whose edge probabilities are determined by a symmetric graphon and study the GFP when the attribute assigned to each vertex is the corresponding entry of the principal eigenvector of the adjacency matrix. Equivalently, this amounts to studying the centrality paradox for the eigenvector centrality. We show that the paradox indeed holds in this setting. Our main focus is the asymptotic behavior of the empirical distribution of the resulting bias values. In the context of the classical friendship paradox, as stated previously, \cite{HdHP2023} conducted an extensive study of this distribution using the framework of local weak convergence of random graphs and further investigated several properties of the limiting measure for models such as the Erd\H{o}s--R\'enyi graph, the configuration model, and the preferential attachment model. In contrast, we work in the graphon-based setting and analyze the dense and sparse regimes separately. We show that the empirical distribution of the bias values converges weakly in probability to a deterministic probability measure. Moreover, this limiting measure admits an explicit characterization: it is the law of a random variable that can be expressed in terms of the principal eigenvalue and the corresponding eigenfunction of the integral operator associated with the underlying graphon, along with a $U(0,1)$ random variable.

\section{Setup and Main Results}
\label{ss.theorems}


\paragraph{ \bf Eigenvector Friendship Paradox.}
Let $G_n$ be a finite undirected graph with $n$ vertices labelled by $[n] = \{1,\ldots,n\}$. Let $\Delta^E_{i,n}$ denote the \emph{eigen friendship-bias (EFB) of vertex $i \in [n]$} given by
\begin{equation}
\Delta^E_{i, n} =  \left[\frac{1}{d_i} \sum_{j \in [n]} A_{ij} r_j^{(n)} - r_i^{(n)}\right] 1_{\{d_i \neq 0\}},
\label{eq:efpbias}
\end{equation}
where $(A_{ij})_{i,j \in [n]}$ denotes the $(i,j)$-th entry of the adjacency matrix $A_n$, $d_i$ is the degree of vertex $i \in [n]$, and let $r = r^{(n)} = (r_i^{(n)}) \ge 0$ be the eigenvector corresponding to the largest eigenvalue $\lambda_1$ of the adjacency matrix $A_n$. That is, $A_nr^{(n)} = \lambda_1r^{(n)}$ and $\frac{1}{n}\sum_{i=1}^n \left(r_i^{(n)}\right)^2 =1$. The \emph{eigen-friendship bias}  is defined as
\begin{equation}
\Delta_{[n]}^E = \frac{1}{n} \sum_{i \in [n]} \Delta_{i,n}^E.
\label{EFBDef}
\end{equation}
Let $D$ be the diagonal matrix with entries as degrees, and define $C$ as 
\[
C_{ij} = \frac{A_{ij}}{d_i}\one_{\{d_i >0\}}.
\]
If the graph is connected, we interpret $ C$ as $ D^ {-1}A$. In what follows, we are primarily concerned with connected graphs. Consequently $\lambda_1 >0$. Observe that \eqref{eq:efpbias} implies 
\[
\Delta^E_{i, n} = (Cr^{(n)})_i - r_i^{(n)}.
\]
As shown in \cite{hazra2026generalized}, we have      
\begin{align}
\label{EFPDeterministic}\langle 1,Cr^{(n)}\rangle \ge \langle 1,r^{(n)}\rangle \quad \Leftrightarrow \quad \frac{1}{n} \sum_{i \in [n]} \Delta_{i,n}^E \ge0.
\end{align}
That is, the eigen friendship paradox holds. For completion, we provide a proof of \eqref{EFPDeterministic} in Lemma \ref{EFP}. 
Suppose the graph is connected. Using the fact that $$A_nr^{(n)}=\lambda_1 r^{(n)}$$ we get
\begin{align}
\nonumber \Delta^E_{i, n} &= (Cr^{(n)})_i - r_i^{(n)},\\
\nonumber & = (D^{-1}Ar^{(n)})_i - r_i^{(n)},\\
\nonumber & = \frac{(Ar^{(n)})_i}{d_i}-r_i^{(n)},\\
\label{EFBmodified}& = \frac{\lambda_1}{d_i}r_i^{(n)} - r_i^{(n)}.
\end{align}
We define the empirical distribution of the eigen-friendship bias as 
\begin{align}
\label{ESDEFP} \mu_n^{E} = \frac{1}{n}\sum_{i=1}^n \delta_{\Delta_{i,n}^E}.   
\end{align}
Let us consider a continuous graphon $W:[0,1]^2\to [0,1]$ such that $W(x,y)=W(y,x)$ and $W \not\equiv 0$. We will present our study based on two graphon models, which are successively stated below.\\
\textbf{Model A: } We consider the inhomogeneous Erd\H{o}s-R\'enyi random graph with edge probabilities given by $\mathbb{P}(A_{ij}=1)=W(\frac{i}{n},\frac{j}{n})$.

\medskip\noindent
\textbf{Model B:} Let $X_1,X_2, X_3,\ldots$ be a sequence of i.i.d. $U(0,1)$ variables. Let $\mathbb{X}$ denote the random sample $(X_1,X_2, X_3,\ldots,X_n)$ and let  $X_{(i)}$ denote the corresponding $i$-th order statistic. We now consider the Erd\H{o}s-R\'enyi random graph with edge probabilities of the form $\mathbb{P}(A_{ij} =1 \mid \mathbb{X})=\varepsilon_n W\left(X_{(i)},X_{(j)}\right)$, i.e. conditioned on $\mathbb{X}$,
\[
A_{ij} \overset{indpt}{\sim} Ber\left(\varepsilon_n W\left(X_{(i)},X_{(j)}\right)\right),
\]
where, for some $\xi > 4,$ fixed throughout,
\[
\lim_{n \to \infty} \frac{(\log n)^\xi}{n\varepsilon_n} =0.
\]
In both models the above edge probabilities are used for $i\neq j$, and we set $A_{ii}=0\, \forall\, i=1,2,\ldots,n$. We define the degree functional as 
\[
K(x)=\int_0^1W(x,y)dy.
\]
Further, let us consider the integral operator, defined $T_W:L^2[0,1]\to L^2[0,1],$ as
\begin{align}
\label{LimitIntergralOperatorDef}(T_Wf)(x)=\int_0^1W(x,y)f(y)dy.
\end{align}
The following technical assumptions are needed.\\
\textbf{Assumption A1} $K(x) \ge k_{min} >0 .$\\
\textbf{Assumption A2} Let $\rho = \lambda_1(T_W)$ be simple and isolated, with $\mathrm{dist}(\rho, \sigma(T_W)\setminus \rho)\ge \gamma>0$.\\
\textbf{Assumption A3} The eigenfunction corresponding to $\rho$, denoted by $\phi,$ satisfies $\phi > 0$ and $\|\phi\|_{L^2}=1$.
\begin{remark}
The assumption about the spectral gap in \textbf{A2} is natural; e.g., it holds when $W > 0$ almost everywhere then the Krein-Rutman Theorem gives a simple positive principal eigenvalue. The quantitative gap is then positive because $T_W$ is compact and the principal eigenvalue is isolated. 
\end{remark}
With all of the above assumptions in place, we are now in a position to state our main theorem. We first state a fundamental lemma about the \emph{ EFP}.
\begin{lemma}\label{EFP}({\bf Generalized Friendship Paradox for EigenVectors})
For the adjacency matrix $A_n$ of a connected undirected graph with atleast two vertices,
\begin{align*}
\Delta_{[n]}^E\ge 0\quad\text{i.e.}\,   \langle 1,Cr^{(n)}\rangle \ge \langle 1,r^{(n)}\rangle,
\end{align*}with equality if and only if the graph is regular.
\end{lemma}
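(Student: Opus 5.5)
Using the eigen-equation as in \eqref{EFBmodified}, the statement reduces to
\[
\sum_{i}\frac{\lambda_1 r_i}{d_i}\ \ge\ \sum_i r_i .
\]
Since the graph is connected, Perron--Frobenius gives $r>0$ entrywise and $\lambda_1>0$; also $d_i\ge 1$ for all $i$. Write $r=r^{(n)}$. Then $\lambda_1 r_i=\sum_j A_{ij}r_j$, so
\[
\langle 1, Cr\rangle=\sum_i\sum_j \frac{A_{ij}r_j}{d_i}=\sum_j r_j\sum_{i\sim j}\frac{1}{d_i}.
\]
The cleanest route is to symmetrize over edges. Writing $\langle 1,Cr\rangle-\langle 1,r\rangle$ as a sum over edges, we have $\sum_j r_j=\sum_j r_j\sum_{i\sim j}\frac{1}{d_j}$, hence
\[
\langle 1,Cr\rangle-\langle 1,r\rangle=\sum_j r_j\sum_{i\sim j}\Bigl(\frac1{d_i}-\frac1{d_j}\Bigr)=\sum_{\{i,j\}\in E}\Bigl(\frac1{d_i}-\frac1{d_j}\Bigr)(r_j-r_i),
\]
where each unordered edge contributes both orientations. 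It therefore suffices to show that $r$ and $d$ are suitably co-monotone along edges, i.e.\ $(d_j-d_i)(r_j-r_i)\ge0$. This fails edgewise in general, so the edgewise form alone will not finish the proof.

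The working plan is instead a variational argument. The inequality is equivalent to $\lambda_1\sum_i r_i/d_i\ge\sum_i r_i$. By Cauchy--Schwarz,
\[
\Bigl(\sum_i r_i\Bigr)^2\le\Bigl(\sum_i \frac{r_i}{d_i}\Bigr)\Bigl(\sum_i r_i d_i\Bigr),
\]
and $\sum_i r_i d_i=\langle r,A1\rangle=\langle Ar,1\rangle=\lambda_1\sum_i r_i$. Dividing by $\sum_i r_i>0$ gives
\[
\sum_i r_i\le\frac{\lambda_1\sum_i r_i}{\sum_i r_i}\cdot\sum_i\frac{r_i}{d_i}=\lambda_1\sum_i\frac{r_i}{d_i},
\]
which is exactly the desired inequality. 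So the key steps, in order, are: (i) positivity of $r$ and $d_i$ from connectivity; (ii) the identity $\sum_i r_i d_i=\lambda_1\sum_i r_i$ from symmetry of $A$; (iii) Cauchy--Schwarz with weights $\sqrt{r_i/d_i}$ and $\sqrt{r_i d_i}$.

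The main care point is the equality case. Equality in Cauchy--Schwarz holds iff $\sqrt{r_i d_i}=c\sqrt{r_i/d_i}$ for all $i$. Since $r_i>0$, this forces $d_i$ to be constant, so the graph is regular. Conversely, if the graph is $d$-regular, then $r=1$ and $\lambda_1=d$, and every $\Delta^E_{i,n}=0$. The only subtle points are invoking Perron--Frobenius for strict positivity of $r$, which needs irreducibility (connectivity) and is what lets us divide through, and noting that at least two vertices guarantees $d_i\ge1$.
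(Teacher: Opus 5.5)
Your proof is correct and is essentially the paper's argument. It uses the same reduction to $\lambda_1\sum_i r_i/d_i\ge\sum_i r_i$ and the same key identity $\sum_i r_i d_i=\langle r,A_n1\rangle=\lambda_1\sum_i r_i$; your Cauchy--Schwarz step with $\sqrt{r_i/d_i}$ and $\sqrt{r_id_i}$ is exactly the weighted harmonic--arithmetic mean inequality with weights $r_i/\sum_j r_j$ that the paper invokes, with the same equality analysis, and the abandoned edge-symmetrization detour can simply be dropped.
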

\begin{theorem}\label{Th.ESDEFB}({\bf Weak Convergence of the Empirical eigen-friendship bias distribution})
Consider either Model A or Model B and suppose the Assumptions A1--A3 hold. For the empirical eigen-friendship bias distribution defined in \eqref{ESDEFP} we have, as $n \to  \infty$
\[
\mu_n^E \Rightarrow \mu_{\infty} \quad \text{in probability,}
\] where $\mu_{\infty}$ is the law of 
\[
\Delta_{\infty} = \left(\frac{\rho}{K(U)}-1\right)\phi(U),
\]where $U \sim U(0,1).$
\end{theorem}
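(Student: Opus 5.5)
The plan is to prove the theorem from the explicit representation \eqref{EFBmodified} on the event that $G_n$ is connected (an event whose probability tends to one, see below). It is convenient to treat both models together by writing $\varepsilon_n\equiv 1$ and $x_i=i/n$ in Model A, and $x_i=X_{(i)}$ in Model B. Then, whenever $d_i>0$,
\[
\Delta^E_{i,n}=\Big(\frac{\lambda_1/(n\varepsilon_n)}{d_i/(n\varepsilon_n)}-1\Big)r^{(n)}_i .
\]
The natural deterministic proxy is $\Delta^\infty_{i,n}=\big(\rho/K(x_i)-1\big)\phi(x_i)$. The proof splits into two claims:
\begin{enumerate}[(i)]
\item $\nu_n:=\frac1n\sum_i\delta_{\Delta^\infty_{i,n}}\Rightarrow\mu_\infty$;
\item $\frac1n\sum_i|\Delta^E_{i,n}-\Delta^\infty_{i,n}|\to0$ in probability.
\end{enumerate}
Together these give $d_{BL}(\mu^E_n,\mu_\infty)\le d_{BL}(\mu_n^E,\nu_n)+d_{BL}(\nu_n,\mu_\infty)\to0$ in probability, where $d_{BL}$ is the bounded-Lipschitz metric. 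Claim (i) holds because $\phi=\rho^{-1}T_W\phi$ is continuous when $W$ is continuous, and $K\ge k_{min}$ is continuous, so $g(x)=(\rho/K(x)-1)\phi(x)$ is a bounded continuous function. In Model A, (i) is then a Riemann-sum statement. In Model B, the empirical measure of $\{X_{(i)}\}$ equals that of $\{X_i\}$, which converges weakly to $U(0,1)$ almost surely by Glivenko–Cantelli.

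For (ii), three ingredients are needed.
\begin{enumerate}[(a)]
\item \emph{Degrees.} I will show $\max_i|d_i/(n\varepsilon_n)-K(x_i)|\to0$ in probability. Conditionally on $\mathbb X$, $d_i$ is a sum of independent Bernoullis with mean $\varepsilon_n\sum_{j\ne i}W(x_i,x_j)$. Bernstein's inequality plus a union bound over $i$ uses $n\varepsilon_n\gg(\log n)^\xi\gg\log n$. The mean is handled by uniform continuity of $W$, together with $\sup_x|\frac1n\sum_jW(x,x_j)-K(x)|\to0$, which follows from the uniform convergence of the empirical distribution and equicontinuity of $\{W(x,\cdot)\}$. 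With A1, this gives $\min_i d_i\ge \tfrac12 k_{min}n\varepsilon_n$ with high probability, so the indicator in \eqref{eq:efpbias} equals one. Connectivity also holds with high probability, since any two vertices have many common neighbours.
\item \emph{Eigenvalue.} Let $M_n=(\varepsilon_nW(x_i,x_j))_{i,j}$. A standard spectral norm bound for centred sparse Bernoulli matrices gives $\|A_n-\mathbb E[A_n\mid\mathbb X]\|=O_P(\sqrt{n\varepsilon_n})$, valid because $n\varepsilon_n\gg\log n$. Removing the diagonal costs at most $\varepsilon_n$. Next, $\frac1{n\varepsilon_n}M_n$ acts as an integral operator with step kernel $W_n(x,y)=W(x_{\lceil nx\rceil},x_{\lceil ny\rceil})$ on piecewise-constant functions, and $\|W_n-W\|_\infty\to0$ by uniform continuity (in Model B this uses $\max_i|X_{(i)}-i/n|\to0$ almost surely). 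Hence $\lambda_1/(n\varepsilon_n)\to\rho$ in probability.
\item \emph{Eigenvector.} The spectral gap A2 persists for $\frac1{n\varepsilon_n}M_n$ at level $\gamma/2$ for large $n$. Davis–Kahan then shows that the normalized top eigenvector of $A_n$, with the paper's normalization $\frac1n\sum_i (r^{(n)}_i)^2=1$, is within $o_P(1)$ in normalized $\ell^2$ distance of the top eigenvector of $M_n$. That eigenvector is in turn close to $(\phi(x_i))_i$, because the top eigenfunction of $T_{W_n}$ converges in $L^2$ to $\phi$, again by the gap and norm convergence. The sign is fixed because $r^{(n)}\ge0$ and $\phi>0$ (A3). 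The outcome is $\frac1n\sum_i(r_i^{(n)}-\phi(x_i))^2\to0$ in probability.
\end{enumerate}

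Combining (a)–(c), on the high-probability event above, write $a_i=\frac{\lambda_1/(n\varepsilon_n)}{d_i/(n\varepsilon_n)}-1$ and $b_i=\frac{\rho}{K(x_i)}-1$. Then
\[
\tfrac1n\sum_i|\Delta^E_{i,n}-\Delta^\infty_{i,n}|\le\max_i|a_i-b_i|\cdot\Big(\tfrac1n\sum_i (r_i^{(n)})^2\Big)^{1/2}+\max_i|b_i|\Big(\tfrac1n\sum_i(r_i^{(n)}-\phi(x_i))^2\Big)^{1/2}.
\]
By Cauchy–Schwarz both terms tend to zero: $\max_i|a_i-b_i|\to0$ by (a), (b) and A1; the factor $\big(\tfrac1n\sum_i(r_i^{(n)})^2\big)^{1/2}$ equals one; and $\max_i|b_i|\le\rho/k_{min}+1$.

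I expect the main obstacle to be step (c): passing from $A_n$ through the discretized matrix $M_n$ to the eigenfunction $\phi$ with a uniform, nonvanishing gap. The difficulty is especially acute in the sparse Model B, where the noise bound $\sqrt{n\varepsilon_n}$ must be $o(n\varepsilon_n)$ uniformly, and the random design points require the quantile approximation $\max_i|X_{(i)}-i/n|\to0$. I would first try a Bai–Yin-type spectral norm bound (or a matrix Bernstein inequality, which costs an extra $\sqrt{\log n}$ factor but is harmless given $n\varepsilon_n\gg(\log n)^\xi$). Combined with Weyl's inequality and Davis–Kahan, this gives (b) and (c) simultaneously.
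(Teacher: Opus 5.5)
Your proposal is correct and follows essentially the paper's route. Uniform degree concentration together with $\lambda_1/(n\varepsilon_n)\to\rho$ controls the factor $\lambda_1/d_i$, Davis--Kahan through the step-kernel operator $T_{W_n}$ gives $L^2$ convergence of the eigenvector, and Cauchy--Schwarz combines the two. The only difference is packaging: you compare with the discrete proxy $(\rho/K(x_i)-1)\phi(x_i)$ and handle the Riemann-sum/Glivenko--Cantelli step separately, whereas the paper compares the step function $\Delta_n^I$ with $h$ directly in $L^1[0,1]$.

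One correction. Your justification of connectivity (``any two vertices have many common neighbours'') is false in general: in Model B the expected number of common neighbours is at most $n\varepsilon_n^2$, which may tend to $0$. However, connectivity is not needed. The identity $\Delta^E_{i,n}=(\lambda_1/d_i-1)r^{(n)}_i$ only uses $d_i>0$ and $A_nr^{(n)}=\lambda_1 r^{(n)}$, and the spectral gap inherited from A2 makes $r^{(n)}$ well defined with high probability.
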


Since bounded Lipschitz functions form a convergence-determining class for weak convergence on $\mathbb R$, it is enough to show that, for every bounded Lipschitz function $F$,
\begin{align}
\label{TargetESDEFB}\frac{1}{n}\sum_{i=1}^n F(\Delta_{i,n}^E) \overset{p}{\to} \int_0^1 F\left(\left(\frac{\rho}{K(x)}-1\right)\phi(x)\right)dx.
\end{align}
We first prove the stronger $L^1[0,1]$ convergence of the bias step functions. Define
\[
\Delta_n^I(x)=\sum_{i=1}^n \Delta_{i,n}^E\mathbf 1_{[(i-1)/n,i/n)}(x).
\]
It is enough to prove
\begin{align}
\label{ESDinprobinL1}\int_0^1 \left|\Delta_n^I(x)-\left(\frac{\rho}{K(x)}-1\right)\phi(x)\right|dx \overset{p}{\to}0.
\end{align}
For $x\in(0,1]$ set $r_n(x)=r_{\lceil nx\rceil}^{(n)}$; the value at $x=0$ is irrelevant. Then
\begin{equation}
\label{EFB_Idendityfn} \Delta_n^I(x)=\left(\frac{\lambda_1}{d_{\lceil nx\rceil}}-1\right)r_n(x).
\end{equation}
We observe that
\begin{align}
&\int_0^1\left|\left(\frac{\lambda_1}{d_{\lceil nx\rceil}}-1\right)r_n(x)-\left(\frac{\rho}{K(x)}-1\right)\phi(x)\right|dx\nonumber\\
&\quad\le \left\|\frac{\lambda_1}{d_{\lceil n\cdot\rceil}}-\frac{\rho}{K(\cdot)}\right\|_2+
\left\|\frac{\rho}{K(\cdot)}-1\right\|_2\|r_n-\phi\|_2.
\label{MainFormulation}
\end{align}
Here we used Cauchy--Schwarz and the normalization $n^{-1}\sum_{i=1}^n(r_i^{(n)})^2=1$. Thus it remains to prove
\[
 \textbf{Part 1:}\, \, \int_0^1\left|\frac{\lambda_1}{d_{\lceil nx\rceil}}-\frac{\rho}{K(x)}\right|^2dx\overset{p}{\to}0,   
\]
and
\[
\textbf{Part 2:} \,\, \int_0^1|r_n(x)-\phi(x)|^2dx\overset{p}{\to}0. 
\]
The same formulation applies to Model B after using the normalization $n \varepsilon_n$.

\begin{remark}
It is worth noting that for the desired weak convergence in the above theorem, the set of bounded Lipschitz functions on $\mathbb{R}$ also acts as a convergence-determining class. 
\end{remark}
\section{Proofs}
To begin with, we present a proof of Lemma \ref{EFP} which says that the generalized friendship paradox for eigenvectors is true.
\begin{proof}[Proof of Lemma \ref{EFP}:]
Recalling the definition of $EFB$ in \eqref{EFBDef} and equation \eqref{EFBmodified}, we have 
\[
\Delta_{[n]}^E = \frac{1}{n}\sum_{i=1}^n \left(\frac{\lambda_1}{d_i}r_i^{(n)} - r_i^{(n)}\right).
\]Thus, it is sufficient to show that
\begin{align}
\label{EFPTarget}\lambda_1\sum_{i=1}^n \frac{r_i^{(n)}}{d_i} \ge \sum_{i=1}^n r_i^{(n)}.
\end{align}Denoting the vector $d = \left(d_1,d_2,\ldots,d_n\right)$, note that $A_n1 = d.$ This  along with the symmetry of $A_n$ implies
\begin{align}
\label{EFPInterm1}\sum_{i=1}^n r_i^{(n)} d_i = \langle r,d\rangle=\langle r,A_n1\rangle = \langle A_nr,1\rangle = \lambda_1 \sum_{i=1}^n r_i^{(n)}.
\end{align}
By Perron-Frobenius, $r_i^{(n)}>0$ for every $i$. Let us consider the weights $w_i = \frac{r_i^{(n)}}{\sum_{i=1}^n r_i^{(n)}}$. Clearly $w_i \ge 0\, \forall\,i \ge 1$ and $\sum_{i=1}^n w_i=1.$ Thus by the weighted harmonic–arithmetic mean inequality in \cite{maze2009note} we get,
\begin{align*}
&\left(\sum_{i=1}^n w_id_i\right)\left(\sum_{i=1}^n \frac{w_i}{d_i}\right) \ge 1,\\
\implies & \left(\frac{\sum_{i=1}^n r_i^{(n)}d_i}{\sum_{i=1}^n r_i^{(n)}} \right) \left(\frac{\sum_{i=1}^n \frac{r_i^{(n)}}{d
_i}}{\sum_{i=1}^n r_i^{(n)}}\right) \ge 1.
\end{align*}
Now invoking \eqref{EFPInterm1} into the first factor of the LHS above, we get \eqref{EFPTarget}. Lastly, equality in the harmonic--arithmetic mean inequality holds if and only if all $d_i$ are equal, which is precisely regularity. This completes the proof.
\end{proof}
\textbf{ Proof of Theorem \ref{Th.ESDEFB} when $F(x)=x$ :} Though from \eqref{MainFormulation} and the observations thereafter, we know that \textbf{Part 1} and \textbf{Part 2} are the main points to be covered, there are several other technical steps that have to be passed. As the proof of Theorem \ref{Th.ESDEFB} constitutes the main result of this paper, for the reader's convenience, we begin by providing a brief outline of the proof. 
\begin{enumerate}
\item We first show that the maximum absolute deviation of the degrees upon scaling by $n$ (respectively $n\varepsilon_n$ in the sparse case) converges to $0$ in probability.
\item We use the first step to show that the function $d_{\lceil n.\rceil}$ upon scaling by $n$ (respectively $n\varepsilon_n$ in the sparse case) converges to $K$ in probability in $L^1.$
\item Next, we show that the difference of the spectral norms of $A_n$ and the mean matrix (conditional mean in the sparse case) is small with high probability.
\item We then show that the largest eigenvalue upon scaling by $n$ (respectively $n \varepsilon_n$ in the sparse case) converges to $\rho$ in probability.
\item We prove \textbf{Part 1}.
\item We prove \textbf{Part 2}.
\end{enumerate}We now proceed towards proving the above steps. 

\begin{lemma}\label{DegMaxDev}
As $n \to \infty$, for \textbf{Model A} we have 
\begin{align*}
\max_{1 \le i \le n}\left|\frac{d_i}{n}-K\left(\frac{i}{n}\right)\right|\overset{p}{\to}0,    
\end{align*} while for \textbf{Model B} we have,
\begin{align*}
    \max_{1 \le i \le n}\left|\frac{d_i}{n\varepsilon_n}-K\left(\frac{i}{n}\right)\right|\overset{p}{\to}0.
\end{align*}
\end{lemma}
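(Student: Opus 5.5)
The plan is to split each deviation into a fluctuation term and a bias term. For Model A we write
\[
\frac{d_i}{n}-K\left(\frac in\right)=\frac{d_i-\mathbb E d_i}{n}+\left(\frac{\mathbb E d_i}{n}-K\left(\frac in\right)\right),
\qquad \mathbb E d_i=\sum_{j\neq i}W\left(\frac in,\frac jn\right).
\]
The bias term is deterministic. Since $W$ is continuous on the compact square $[0,1]^2$, it is uniformly continuous. Hence $\frac1n\sum_{j}W(i/n,j/n)$ is a Riemann sum for $K(i/n)$, with error bounded by the modulus of continuity $\omega_W(1/n)$ plus $1/n$ for the omitted diagonal term, uniformly in $i$. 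So the bias term is $o(1)$ uniformly in $i$.

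For the fluctuation term, $d_i-\mathbb Ed_i$ is a sum of $n-1$ independent centered variables bounded by $1$. Hoeffding's inequality gives
\[
\mathbb P\bigl(|d_i-\mathbb Ed_i|>n\delta\bigr)\le 2e^{-2n\delta^2}.
\]
A union bound over $i\le n$ gives a total bound of $2ne^{-2n\delta^2}\to0$, which finishes Model A.

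For Model B we condition on $\mathbb X$. Given $\mathbb X$, the $A_{ij}$ with $j\neq i$ are independent Bernoulli variables with means $\varepsilon_n W(X_{(i)},X_{(j)})\le\varepsilon_n$. The conditional variance of $d_i$ is therefore at most $n\varepsilon_n$. Bernstein's inequality gives
\[
\mathbb P\bigl(|d_i-\mathbb E[d_i\mid\mathbb X]|>\delta n\varepsilon_n \,\big|\, \mathbb X\bigr)\le 2\exp\left(-\frac{\delta^2 n\varepsilon_n}{2(1+\delta/3)}\right).
\]
Since $n\varepsilon_n\gg(\log n)^\xi\gg\log n$, the union bound $n\exp(-c\delta^2 n\varepsilon_n)$ tends to $0$ uniformly in $\mathbb X$. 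Taking expectations then removes the conditioning. This step is the only place where the sparsity assumption enters.

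It remains to control the conditional bias
\[
\frac{1}{n}\sum_{j\neq i}W\bigl(X_{(i)},X_{(j)}\right)-K\left(\frac in\right).
\]
We split it into three pieces. First, replace $K(i/n)$ by $K(X_{(i)})$. By the Glivenko--Cantelli theorem, or the DKW inequality, $\max_i|X_{(i)}-i/n|\to0$ almost surely. Moreover $K$ is uniformly continuous because $W$ is. So this replacement costs $o(1)$ uniformly in $i$.

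Second, compare $\frac1n\sum_{j}W(x,X_{(j)})$ with $K(x)$, uniformly in $x\in[0,1]$. The sum over $j$ of $W(x,X_{(j)})$ equals the sum of $W(x,X_j)$, so this is $\sup_x\bigl|\frac1n\sum_j W(x,X_j)-\int_0^1 W(x,y)\,dy\bigr|$. It tends to $0$ almost surely by a uniform law of large numbers. One clean route is to note that the family $\{W(x,\cdot)\}_{x\in[0,1]}$ is equicontinuous and bounded. Equivalently, $\frac1n\sum_j W(x,X_j)=\int W(x,y)\,dF_n(y)$, and integrals of an equicontinuous bounded family converge uniformly under the weak convergence $F_n\Rightarrow U(0,1)$, which holds almost surely. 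We then evaluate at $x=X_{(i)}$.

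Third, the removed diagonal term contributes at most $1/n$.

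Combining the fluctuation estimate and the three bias estimates gives the Model B claim. The main obstacle is this uniform-in-$x$ law of large numbers, that is, the bias step for Model B. It is handled by the equicontinuity argument, or alternatively by a finite $\eta$-net of $x$ values combined with the modulus of continuity of $W$.
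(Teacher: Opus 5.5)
Your proposal is correct and follows essentially the same route as the paper. Both split the deviation into a fluctuation term, handled by a concentration inequality plus a union bound, and a bias term. The paper uses Bernstein's inequality in both models, while your Hoeffding bound for Model A works equally well. For Model B you use the same conditional three-term decomposition: a uniform-in-$x$ law of large numbers, for which the paper uses exactly your $\eta$-net plus modulus-of-continuity alternative with the SLLN at the net points, and Glivenko--Cantelli for $\max_i|X_{(i)}-i/n|$. Your direct $\omega_W(1/n)$ Riemann-sum bound and your explicit handling of the omitted diagonal term $j=i$ are small refinements of details the paper treats less directly or glosses over.
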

\begin{proof}
For \textbf{Model A}, it suffices to show the following:
\begin{align*}
\max_{1 \le i \le n}\left|\frac{d_i}{n}-\mathbb{E}\left(\frac{d_i}{n}\right)\right|\overset{p}{\to}0\quad \text{and}\quad \max_{1 \le i \le n}\left|\mathbb{E}\left(\frac{d_i}{n}\right)-K\left(\frac{i}{n}\right)\right|\to 0.
\end{align*}
To check the first one, note that for $\epsilon >0,$
\begin{align*}
\mathbb{P}\left(\max_{1 \le i \le n}\left|\frac{d_i}{n}-\mathbb{E}\left(\frac{d_i}{n}\right)\right| > \epsilon\right) &\le \sum_{i=1}^n \mathbb{P}\left(\left|d_i-\mathbb{E}\left(d_i\right)\right| > n\epsilon\right),\\
& = \sum_{i=1}^n\mathbb{P}\left(\left|\sum_{j=1}^n\left(A_{ij}-\mathbb{E}(A_{ij})\right)\right|>n\epsilon\right).
\end{align*}
Note that as $W$ maps to $[0,1]$, $\left|K(x)\right|\le 1$ for all $x \in [0,1]$, and also $\left|A_{ij}-\mathbb{E}(A_{ij})\right| \le 1,\forall\, i,j.$
Also $$\mathbb{E}\left(A_{ij}-\mathbb{E}(A_{ij})\right)^2= W\left(\frac{i}{n},\frac{j}{n}\right)\left(1-W\left(\frac{i}{n},\frac{j}{n}\right)\right)\le 1/4.$$
Thus, by Bernstein's inequality, we get,
\begin{align*}
& \mathbb{P}\left(\left|\sum_{j=1}^n\left(A_{ij}-\mathbb{E}(A_{ij})\right)\right|>n\epsilon\right)\\ 
&\le 2\exp\left(-\frac{\frac{n^2\epsilon^2}{2}}{\sum_{j=1}^n\left[W\left(\frac{i}{n},\frac{j}{n}\right)\left(1-W\left(\frac{i}{n},\frac{j}{n}\right)\right)\right]+\frac{2n\epsilon}{3}} \right),\\
& \le 2\exp\left(-\frac{n^2\epsilon^2}{\frac{n}{2}+ \frac{4n\epsilon}{3}} \right).
\end{align*}
Hence, using a union bound, we get
\begin{align*}
\mathbb{P}\left(\max_{1 \le i \le n}\left|\frac{d_i}{n}-\mathbb{E}\left(\frac{d_i}{n}\right)\right| > \epsilon\right) &\le 2\sum_{i=1}^n \exp\left(-\frac{n^2\epsilon^2}{\frac{n}{2}+ \frac{4n\epsilon}{3}} \right),\\
&= 2n\exp{\left(-\frac{n^2\epsilon^2}{\frac{n}{2}+ \frac{4n\epsilon}{3}} \right)} \to 0\quad \text{as n}\to \infty.
\end{align*}
For the remainder, note that
\begin{align}
\nonumber \max_{1 \le i \le n} \left|\mathbb{E}\left( \frac{d_i}{n}\right) - K\left(\frac{i}{n}\right)\right| &= \max_{1 \le i \le n} \left|\frac{1}{n}\sum_{j=1}^n W\left(\frac{i}{n},\frac{j}{n}\right) - K\left(\frac{i}{n}\right)\right|,\\
\label{EFPAStep1Part2.1}& \le \sup_{x \in [0,1]}\left|\frac{1}{n}\sum_{j=1}^n W\left(x,\frac{j}{n}\right) - K\left(x\right)\right|.
\end{align}So it is enough to show that the RHS above goes to zero.
Also as the empirical measure $\lambda_n:= \frac{1}{n}\sum_{i=1}^n \delta_{\frac{i}{n}}$ weakly converges to the Lebesgue measure on [0,1], we have for each fixed $x$,
\begin{align}
\label{ESDStep1Part2Interm1}\frac{1}{n}\sum_{j=1}^n W\left(x,\frac{j}{n}\right) = \int_0^1W(x,y)\lambda_n(dy) \to \int_0^1W(x,y)dy=K(x).
\end{align}
Now, for $\epsilon>0,$ denoting by $\omega$ the modulus of continuity of $W$, we can find $\eta>0$ such that $\omega(\eta) < \epsilon.$ Using the compactness of [0,1], we find a finite $\eta$-net, $\{x_1,x_2,\ldots x_l\}$, say.
Then for any $x \in [0,1],$ we can find $x_k$ from the $\eta$-net such that $\left|x-x_k\right|< \eta.$ Then,
\begin{align*}
& \left|\frac{\sum_{j=1}^n W\left(x,\frac{j}{n}\right)}{n} -K(x)\right|,\\
\le & \left|\frac{\sum_{j=1}^n \left(W\left(x,\frac{j}{n}\right)-W\left(x_k,\frac{j}{n}\right)\right)}{n} \right| + \left|\frac{\sum_{j=1}^n W\left(x_k,\frac{j}{n}\right)}{n}-K(x_k)\right|+|K(x_k) - K(x)|,\\
\le & 2\epsilon + \max_k\left|\frac{\sum_{j=1}^n W\left(x_k,\frac{j}{n}\right)}{n}-K(x_k)\right|.
\end{align*}Using \eqref{ESDStep1Part2Interm1} and the fact that we are dealing with finitely many $k's$ and since $\epsilon$ is arbitrary, we get as $n \to \infty,$
\begin{align*}
\sup_{x\in [0,1]}\left|\frac{\sum_{j=1}^n W\left(x,\frac{j}{n}\right)}{n} -K(x)\right| \to 0.
\end{align*}This completes the proof.
For the \textbf{Model B} we will need to tweak the above argument slightly due to the randomized graphon arguments. Here instead of two-term decomposition we will need a three-term decomposition, and then we will conclude using a standard $3\epsilon$ argument. We will show that
\[
\max_{1 \le i \le n} \left|\frac{d_i}{n\varepsilon_n} - \mathbb{E}\left( \frac{d_i}{n\varepsilon_n} \mid \mathbb{X}\right)\right| \overset{p}{\to} 0,\quad \max_{1 \le i \le n} \left|\mathbb{E}\left( \frac{d_i}{n\varepsilon_n} \mid \mathbb{X}\right) - K\left(X_{(i)}\right)\right| \overset{p}{\to} 0,
\]and,
\[
\max_{1 \le i \le n }\left|K\left(X_{(i)}\right) - K\left(\frac{i}{n}\right)\right| \overset{p}{\to} 0.
\]
Note that as conditioned on $\mathbb{X}$, $A_{ij} \overset{indpt}{\sim} Ber\left(\varepsilon_n W\left(X_{(i)}, X_{(j)}\right)\right),$ by a similar argument using Bernstein's inequality as before we get for $\delta > 0,$  
\begin{align*}
&\mathbb{P}\left(\left| d_i - \mathbb{E}(d_i|\mathbb{X})\right| > n\varepsilon_n\delta | \mathbb{X}\right) \le 2\exp\left(-\frac{n^2\varepsilon_n^2\delta^2}{\frac{n\varepsilon_n}{2}+ \frac{4n\varepsilon_n\delta}{3}} \right).
\end{align*}Therefore, using a union bound again, we get as $n \to \infty$,
\begin{align}
\mathbb{P}\left(\max_{1 \le i \le n}\left|d_i - \mathbb{E}(d_i|\mathbb{X})\right| > n\varepsilon_n \delta\right)
\nonumber \le & \sum_{i=1}^n \mathbb{E}\,\mathbb{P}\left(|d_i -\mathbb{E}(d_i|\mathbb{X}) | > n\varepsilon_n\delta|\mathbb{X}\right)\\
\label{EFPBStep1Part1}\le & 2n\exp\left(-\frac{n^2\varepsilon_n^2\delta^2}{\frac{n\varepsilon_n}{2}+ \frac{4n\varepsilon_n\delta}{3}} \right) \to 0.
\end{align}
Next, similar to \eqref{EFPAStep1Part2.1} we get
\begin{align*}
\max_{1 \le i \le n} \left|\mathbb{E}\left( \frac{d_i}{n\varepsilon_n} | \mathbb{X}\right) - K\left(X_{(i)}\right)\right| \le \sup_{x \in [0,1]}\left|\frac{1}{n}\sum_{j=1}^n W\left(x,X_{(j)}\right) - K\left(x\right)\right|.
\end{align*}We will show that RHS above goes to $0$ almost surely. As before, using the modulus of continuity $\omega$ of $W$ and the compactness of [0,1], for any $\epsilon >0,$ we can find $\delta >0$ such that $\omega(\delta) < \epsilon$ and also a $\delta$-net, $\{x_1,x_2,\ldots, x_l\},$ so that for any $x\in [0,1]$ and we can find $x_k$ such that $|x-x_k|< \delta.$Then using a similar argument as in \textbf{Model A} we get
\begin{align*}
\sup_{x \in [0,1]}\left|\frac{1}{n}\sum_{j=1}^n W\left(x,X_{(j)}\right) -K(x)\right| & \le 
 2\omega(\delta) + \max_{k=1,2,\ldots,l}\left|\frac{1}{n}\sum_{j=1}^n W\left(x_k,X_{(j)}\right) - K(x_k)\right|.
\end{align*}
Now, for any such k we have by classical SLLN, as $n \to \infty,$ \begin{align*}
\frac{1}{n}\sum_{j=1}^n W\left(x_k,X_{(j)}\right) - K(x_k) & = \frac{1}{n}\sum_{j=1}^n W\left(x_k,X_j\right) - K(x_k),\\
& \overset{a.s.}{\to } \mathbb{E}\left(W(x_k,X_1)\right) - K(x_k) = K(x_k)-K(x_k)=0. 
\end{align*}
Since we have only finitely many such $k$'s, we get, as $n \to \infty,$
\[
\max_{k=1,2,\ldots,l} \left|\frac{1}{n}\sum_{j=1}^n W\left(x_k,X_{(j)}\right) - K(x_k)\right| \overset{a.s.}{\to }0,
\]and thus, as $n \to \infty,$
\begin{align}
\label{EFPBStep1Part2}  \sup_{x \in [0,1]}\left|\frac{1}{n}\sum_{j=1}^n W\left(x,X_{(j)}\right) - K\left(x\right)\right| \overset{a.s.}{\to }0.  
\end{align}
Lastly, we consider the term  $\max_i \left|K(X_{(i)}) - K(\frac{i}{n})\right|$. Denoting the sample cdf of the $X_i's$ by $F_n(.)$ and the cdf of $U(0,1)$ by $F,$ we get using the Glivenko-Cantelli theorem that,
\begin{align}
\label{EFPBStep1OrderStats}\max_i\left|X_{(i)}-\frac{i}{n}\right| & = \max_i\left|F_n(X_{(i)})-F(X_{(i)})\right|\le \sup_{x \in [0,1]}\left|F_n(x)-F(x)\right|\overset{a.s.}{\to} 0.
\end{align}
As $K$ is uniformly continuous on [0,1], it admits a modulus of continuity, $\omega_K,$ say. and hence we get using the non-decreasing nature of $\omega_K,$
\begin{align}
\label{EFPBStep1Part3}\max_i\left|K(X_{(i)}) - K\left(\frac{i}{n}\right)\right| \le \omega_K\left(\max_i\left|X_{(i)}-\frac{i}{n}\right|\right) \overset{a.s.}{\to} 0,
\end{align}where the last implication follows by the fact that $\omega_K(x) \to 0 $ as $x \to 0.$

Finally, a $3\epsilon$-argument involving the outcomes of equations \eqref{EFPBStep1Part1},\eqref{EFPBStep1Part2} and \eqref{EFPBStep1Part3} together leads to,
\[
\max_i\left|\frac{d_i}{n\varepsilon_n}-K\left(\frac{i}{n}\right)\right| \overset{p}{\to }0.
\]
\end{proof}
\begin{remark}\label{DegDevL1}
 Lemma \ref{DegMaxDev} entails as $n \to \infty$, 
\[
\int_0^1\left|\frac{d_{\lceil nx \rceil}}{n} -K(x)\right|\overset{p}{\to} 0.
\]To see this note that,
\begin{align}
\nonumber \int_0^1\left|\frac{d_{\lceil nx \rceil}}{n} -K(x)\right|& \le \int_0^1\left|\frac{d_{\lceil nx \rceil}}{n} -K\left(\frac{\lceil nx \rceil}{n}\right)\right|dx + \int_0^1 \left|K\left(\frac{\lceil nx \rceil}{n}\right)- K(x)\right|dx,\\
\label{EFPStep4}&\le \max_i\left|\frac{d_i}{n} - K\left(\frac{i}{n}\right)\right|+ \int_0^1 \left|K\left(\frac{\lceil nx \rceil}{n}\right)- K(x)\right|dx.      
\end{align} 
Clearly then Lemma \ref{DegMaxDev} and the continuity of $K$ and DCT  ensure that the RHS of \eqref{EFPStep4} converges to $0$ in probability, as desired.
For \textbf{Model B}, the exact same argument with $n$ replaced by $n\varepsilon$ in the denominator of the first term in the integrand gives, 
\[
\int_0^1\left|\frac{d_{\lceil nx \rceil}}{n\varepsilon} -K(x)\right|\overset{p}{\to} 0.
\]

\end{remark}

\begin{lemma}\label{ConcentrationAroundMean}
We denote the mean matrix $\mathbb{E}A_n$ by $P_n$. So, 
\[
\text{for \bf Model A, } \left(P_n\right)_{ij} = W\left(\frac{i}{n},\frac{j}{n}\right)\quad \text{and for \bf Model B, }\left(P_n\right)_{ij} = \varepsilon_n W\left(X_{(i)},X_{(j)}\right).
\]
Then as $n \to \infty,$ for \textbf{Model A} we have
\[
\left\|\frac{A_n}{n}-\frac{P_n}{n}\right\|\overset{p}{\to} 0,
\]
and for \textbf{Model B} we have,
\[
\left\|\frac{A_n}{n\varepsilon_n}-\frac{P_n}{n\varepsilon_n}\right\|\overset{p}{\to} 0.
\]
\end{lemma}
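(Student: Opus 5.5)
The plan is to prove a non-asymptotic spectral-norm bound for the centred matrix $A_n-P_n$ with the \emph{matrix Bernstein inequality}, working conditionally on $\mathbb{X}$ in Model B, and then to divide by $n$, respectively $n\varepsilon_n$. Frobenius-norm bounds are not enough here. In Model B, $\mathbb{E}\|A_n-P_n\|_F^2\le n^2\varepsilon_n$, which after division by $(n\varepsilon_n)^2$ gives $1/\varepsilon_n$, and this does not vanish. So a genuinely spectral concentration bound is needed.

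First, I handle the diagonal. With $A_{ii}=0$, the matrix $P_n$ as written has diagonal entries $W(i/n,i/n)$, respectively $\varepsilon_n W(X_{(i)},X_{(i)})$. Let $\tilde P_n$ be $P_n$ with its diagonal set to zero. Then $\|P_n-\tilde P_n\|\le 1$ (resp.\ $\le\varepsilon_n$), which is $o(n)$ (resp.\ $o(n\varepsilon_n)$, since $n\varepsilon_n\to\infty$). It therefore suffices to treat $A_n-\tilde P_n$.

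Next I write
\[
A_n-\tilde P_n=\sum_{i<j} Z_{ij},\qquad Z_{ij}=(A_{ij}-p_{ij})(e_ie_j^T+e_je_i^T),
\]
where $p_{ij}$ is the edge probability. Conditionally on $\mathbb{X}$ in Model B, and unconditionally in Model A, the $Z_{ij}$ are independent, centred and symmetric, with $\|Z_{ij}\|\le 1$. The variance proxy is
\[
\Big\|\sum_{i<j}\mathbb{E}Z_{ij}^2\Big\|=\max_i\sum_{j\neq i}p_{ij}(1-p_{ij})\le \sigma_n^2,
\]
because $\sum_{i<j}\mathbb{E}Z_{ij}^2$ is diagonal. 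Here $\sigma_n^2=n$ in Model A and $\sigma_n^2=n\varepsilon_n$ in Model B, using $W\le 1$. Matrix Bernstein then gives, for $t>0$,
\[
\mathbb{P}\left(\|A_n-\tilde P_n\|\ge t\mid\mathbb{X}\right)\le 2n\exp\left(-\frac{t^2/2}{\sigma_n^2+t/3}\right).
\]

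For Model A, I take $t=\delta n$. The bound becomes $2n\exp(-c\,\delta^2 n)\to 0$, which gives $\|A_n-P_n\|/n\overset{p}{\to}0$.

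For Model B, I take $t=\delta n\varepsilon_n$. The exponent is then of order $-c_\delta\, n\varepsilon_n$. By assumption $n\varepsilon_n\gg(\log n)^{\xi}$ with $\xi>4>1$, so $n\varepsilon_n/\log n\to\infty$. Hence $2n\exp(-c_\delta n\varepsilon_n)\to0$ uniformly in $\mathbb{X}$. Taking expectation over $\mathbb{X}$ removes the conditioning and yields $\|A_n-P_n\|/(n\varepsilon_n)\overset{p}{\to}0$. Equivalently, with high probability $\|A_n-P_n\|\lesssim\sqrt{n\varepsilon_n\log n}+\log n$, and both terms are $o(n\varepsilon_n)$.

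The main obstacle I expect is the sparse regime. There one must avoid crude entrywise or Frobenius estimates and also check that the logarithmic loss in matrix Bernstein is absorbed by the growth condition on $n\varepsilon_n$; that condition is far more than enough here. The randomness of $\mathbb{X}$ is harmless, because every bound above is uniform over the realisation of the order statistics.
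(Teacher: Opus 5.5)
Your proof is correct, but it reaches the bound with a different concentration tool. The conditioning on $\mathbb{X}$, the bound uniform in $\mathbb{X}$, and the final averaging are the same as in the paper.

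The paper does not use matrix Bernstein. It imports the trace (moment) estimate from Lemma 4.1 of \cite{chakrabarty2020eigenvalues}, which bounds $\mathbb{E}\,\mathrm{Tr}(A_n-P_n)^k$ by something of order $n(2\sqrt{n\varepsilon_n})^k$ for even $k\asymp(n\varepsilon_n)^{1/4}$. Markov's inequality then gives $\|A_n-P_n\|\le 2\sqrt{n\varepsilon_n}+C_1(n\varepsilon_n)^{1/4}(\log n)^{\xi/4}$ outside an event of conditional probability $O(n e^{-(\log n)^{\xi/4}})$.

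That route gives the sharp order $\|A_n-P_n\|=O_p(\sqrt{n\varepsilon_n})$ with no logarithmic loss. This is what the rates $O_p(n^{-1/2})$ and $O_p((n\varepsilon_n)^{-1/2})$ quoted later in the eigenvalue and eigenvector lemmas rest on. The price is a combinatorial moment computation taken from another paper, and it is where the hypothesis $\xi>4$ enters: it is needed to make $n e^{-(\log n)^{\xi/4}}\to 0$.

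Your matrix Bernstein argument is self-contained given a standard inequality. It needs only $n\varepsilon_n/\log n\to\infty$, but it loses a factor $\sqrt{\log n}$, giving $O_p(\sqrt{n\varepsilon_n\log n})$. The lemma only asks for $o_p(n\varepsilon_n)$, and downstream only $o_p(1)$ after normalisation is actually used, so the loss is harmless. The later stated rates would simply weaken to $O_p(\sqrt{\log n/n})$.

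Your explicit treatment of the diagonal is also a small gain in rigour that the paper skips: $P_n$ as displayed has nonzero diagonal while $A_{ii}=0$, so $A_n-P_n$ is not exactly centred.
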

\begin{proof}
For \textbf{Model A} we first show that, 
\[
\mathbb{P}\left(\|A_n-P_n\|> 2\sqrt{Mn} + C_1n^{1/4}(\log n)^{\xi/4}\right) = O\left(n\exp(-(\log n)^{\xi/4})\right),
\]
where $C_1$ is a suitable positive constant, $M=1/4$ and $\xi>4.$
In view of the proof of Lemma 4.1 from \cite{chakrabarty2020eigenvalues}, we get
\begin{align}
\label{ExTr} \mathbb{E}\left[\mathrm{Tr}(A_n-P_n)^k\right] \le k_1n\left(2\left(\sqrt{Mn}\right)^k\right),
\end{align}
where $k_1$ is a positive constant and there exists a constant $a>0$ such that the exponent $k \in 2\mathbb{N}$ can be chosen as $$k=\sqrt{2}a(Mn)^{1/4}.$$
Using the fact that $(1-x)^k \le \exp(-kx),$ for $k,x>0,$ we get by applying Markov's inequality and \eqref{ExTr},
\begin{align*}
\mathbb{P}\left(\left\|A_n-P_n\right\|> 2\sqrt{Mn} + C_1n^{1/4}(\log n)^{\xi/4}\right) &\le \frac{\mathbb{E}\|A_n-P_n\|^k}{\left(2\sqrt{Mn} + C_1n^{1/4}(\log n)^{\xi/4}\right)^k},\\
&\le \frac{\mathbb{E}\mathrm{Tr}(A_n-P_n)^k}{\left(2\sqrt{Mn} + C_1n^{1/4}(\log n)^{\xi/4}\right)^k},\\
&\le \frac{k_1n\left(2\sqrt{Mn}\right)^k}{\left(2\sqrt{Mn} + C_1n^{1/4}(\log n)^{\xi/4}\right)^k},\\
& = k_1 n\left(1-\frac{C_1n^{1/4}(\log n)^{\xi/4}}{\left(2\sqrt{Mn} + C_1n^{1/4}(\log n)^{\xi/4}\right)}\right)^k,\\ 
&\le k_1n\exp\left(-\frac{kC_1n^{1/4}\left(\log n\right)^{\xi/4}}{2\sqrt{Mn} + C_1n^{1/4}(\log n)^{\xi/4}}\right).
\end{align*}
Now invoking the above value of $k$ and using the fact that $(\log n)^\xi \ll n$, a regular computation along the lines of Lemma 4.1 from \cite{chakrabarty2020eigenvalues} leads to the,
\begin{align*}
\mathbb{P}\left(\left\|A_n-P_n\right\|> 2\sqrt{Mn} + C_1n^{1/4}(\log n)^{\xi/4}\right) &\le  
k_1n\exp\left(-\frac{\sqrt2a C_1M^{1/4}(\log n)^{\xi/4}}{2\sqrt{M}+C_1C_2}\right),\\
&= O\left(n\exp\left(-(\log n)^{\xi/4}\right)\right),
\end{align*}
for all large $n$, where $C_2 >0$ is such that $\frac{(\log n)^\xi}{n} < C_2$ for all $n \ge 1.$
This proves the claim and hence, 
\[
\|A_n-P_n\|=O_p(\sqrt{n})=o_p(n),
\]which completes the proof for \textbf{Model A}.\\ For \textbf{Model B}, we show a similar high (conditional) probability bound:
\[
\mathbb{P}\left(\|A_n-P_n\| > 2\sqrt{n\varepsilon_n} + C_1(n\varepsilon_n)^{1/4}(\log n)^{\xi/4}\right|\mathbb{X})=O\left(n\exp\left(-(\log n)^{\xi/4}\right)\right), 
\]where $C_1$ is a suitable positive constant and $\xi>4.$ Going along the lines of the proof of Lemma 4.1 from \cite{chakrabarty2020eigenvalues} again, we get
\begin{align}
\label{ExTrConditional} \mathbb{E}\left[\mathrm{Tr}(A_n-P_n)^k|\mathbb{X}\right] \le k_1n\left(2\left(\sqrt{n\varepsilon_n}\right)^k\right),
\end{align}
where $k_1$ is a positive constant and there exists a constant $a>0$ such that the exponent $k \in 2\mathbb{N}$ can be chosen as $$k=\sqrt{2}a(n\varepsilon_n)^{1/4}.$$
Using the fact that $(1-x)^k \le \exp\{-kx\},$ for $k,x>0,$ we get by applying the Conditional Markov inequality (Exercise 34.9 in \cite{billingsley2017probability} ) and \eqref{ExTrConditional},
\begin{align*}
&\mathbb{P}\left(\left\|A_n-P_n\right\|> 2\sqrt{n\varepsilon_n} + C_1(n\varepsilon_n)^{1/4}(\log n)^{\xi/4}|\mathbb{X}\right)\\& \le k_1n\exp\left(-\frac{kC_1(n\varepsilon_n)^{1/4}\left(\log n\right)^{\xi/4}}{2\sqrt{n\varepsilon_n} + C_1(n\varepsilon_n)^{1/4}(\log n)^{\xi/4}}\right).
\end{align*}
As done for \textbf{Model A}, invoking the above value of $k$ and using the sparsity assumption that $(\log n)^\xi \ll n\varepsilon_n$, a similar computation  along the lines of the proof of Lemma 4.1 in \cite{chakrabarty2020eigenvalues} leads to
\begin{align*}
\mathbb{P}\left(\left\|A_n-P_n\right\|> 2\sqrt{n\varepsilon_n} + C_1(n\varepsilon_n)^{1/4}(\log n)^{\xi/4}|\mathbb{X}\right) & \le k_1n\exp\left(-\frac{\sqrt2a C_1(\log n)^{\xi/4}}{2+C_1C_2}\right),\\ 
&= O\left(n\exp\left(-(\log n)^{\xi/4}\right)\right),    
\end{align*} for all large $n$, where $C_2 >0$ is such that $\frac{(\log n)^\xi}{n\varepsilon_n} < C_2 \forall n \ge 1.$ 
Therefore we get as $n \to \infty$,
\begin{align}
\nonumber &\mathbb{P}\left(\left\|A_n-P_n\right\|> 2\sqrt{n\varepsilon_n} + C_1(n\varepsilon_n)^{1/4}(\log n)^{\xi/4}\right),\\
\nonumber =&\mathbb{E}\left(\mathbb{P}\left(\left\|A_n-P_n\right\|> 2\sqrt{n\varepsilon_n} + C_1(n\varepsilon_n)^{1/4}(\log n)^{\xi/4}|\mathbb{X}\right)\right),\\
\label{EFPStep2}=&  O\left(n\exp\left(-(\log n)^{\xi/4}\right)\right)\to 0.
\end{align}
This proves the desired high probability bound for \textbf{Model B} and hence, 
\[
\|A_n-P_n\|=O_p(\sqrt{n\varepsilon_n})=o_p(n\varepsilon_n),
\]
which completes the proof.
\end{proof}

\begin{remark}
Denoting the largest eigenvalue of $P_n$ by $\lambda_1(P_n),$ it immediately follows from Lemma \ref{ConcentrationAroundMean} and Weyl's inequality that for \textbf{Model A},
\[
\left|\lambda_1 - \lambda_1(P_n)\right|\le \|A_n-P_n\|=o_p(n),
\]and for \textbf{Model B},
\[
\left|\lambda_1 - \lambda_1(P_n)\right|\le \|A_n-P_n\|=o_p(n\varepsilon).
\]
\end{remark}
\begin{lemma}\label{LargestEigenvalueConvergence}
As $n \to \infty$, for \textbf{Model A} we have,
\[
\frac{\lambda_1}{n} \overset{p}{\to} \rho,
\]and for \textbf{Model B} we have,
\[
\frac{\lambda_1}{n\varepsilon_n} \overset{p}{\to} \rho,
\]where $\rho$ is the largest eigenvalue of the integral operator defined in  \eqref{LimitIntergralOperatorDef}.
\end{lemma}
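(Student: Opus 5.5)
By the Remark following Lemma \ref{ConcentrationAroundMean} and Weyl's inequality, $|\lambda_1-\lambda_1(P_n)|=o_p(n)$ in Model A and $o_p(n\varepsilon_n)$ in Model B. It therefore suffices to show
\[
\frac{\lambda_1(P_n)}{n}\to\rho \quad\text{(Model A)}, \qquad \frac{\lambda_1(P_n)}{n\varepsilon_n}\overset{p}{\to}\rho \quad\text{(Model B)}.
\]
The plan is to identify $P_n/n$ (respectively $P_n/(n\varepsilon_n)$) with an integral operator on $L^2[0,1]$ given by a step kernel. We then show that this operator is close to $T_W$ in operator norm and transfer eigenvalues using the fact that operator-norm perturbations move the top of the spectrum of self-adjoint operators by at most the norm of the perturbation.

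\emph{Model A.} Define the step kernel
\[
W_n(x,y)=W\left(\tfrac{\lceil nx\rceil}{n},\tfrac{\lceil ny\rceil}{n}\right)\mathbf 1_{\{\lceil nx\rceil\neq\lceil ny\rceil\}}.
\]
The diagonal is excluded because $A_{ii}=0$; if the paper intends $P_n$ to include the diagonal $W(i/n,i/n)$, that diagonal contributes norm at most $1/n$ after scaling and is harmless either way. The map $v\mapsto\sum_i v_i\sqrt n\,\mathbf 1_{[(i-1)/n,i/n)}$ is an isometry from $\mathbb R^n$ onto the step functions. Under it, $P_n/n$ is unitarily equivalent to the restriction of $T_{W_n}$ to step functions, and $T_{W_n}$ vanishes on the orthogonal complement. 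Hence the nonzero spectra coincide, and in particular
\[
\lambda_1(P_n/n)=\sup_{\|f\|_2=1}\langle T_{W_n}f,f\rangle .
\]
Since $\rho=\sup_{\|f\|_2=1}\langle T_Wf,f\rangle$ (a positive operator-kernel argument is not needed, because $\rho$ is the top of the spectrum by A2–A3), we get
\[
\left|\lambda_1(P_n/n)-\rho\right|\le\|T_{W_n}-T_W\|\le\|W_n-W\|_{L^2([0,1]^2)}.
\]
The right-hand side is a Hilbert–Schmidt bound. It is at most $\omega(1/n)$ plus the contribution of the diagonal blocks, which has measure $1/n$, so it tends to $0$ by uniform continuity of $W$.

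\emph{Model B.} The same embedding gives $P_n/(n\varepsilon_n)\cong T_{\widetilde W_n}$ with
\[
\widetilde W_n(x,y)=W\left(X_{(\lceil nx\rceil)},X_{(\lceil ny\rceil)}\right)
\]
off the diagonal blocks. On the event where $\max_i|X_{(i)}-i/n|\le\delta_n$, we have
\[
|\widetilde W_n(x,y)-W(x,y)|\le\omega(\delta_n+1/n)
\]
off the diagonal blocks. By \eqref{EFPBStep1OrderStats} (Glivenko–Cantelli), $\delta_n\to0$ almost surely, so $\|T_{\widetilde W_n}-T_W\|\to0$ almost surely. Hence $\lambda_1(P_n)/(n\varepsilon_n)\to\rho$ almost surely. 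Combining this with Lemma \ref{ConcentrationAroundMean} and Weyl's inequality gives convergence in probability.

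The main point needing care is the justification that $\lambda_1(P_n)$, the largest eigenvalue of the matrix, equals the supremum of the Rayleigh quotient of $T_{W_n}$, and that this supremum converges to $\rho$. Here I would note explicitly that $\rho>0$ is the largest spectral value of $T_W$: by A3, $\langle T_W\phi,\phi\rangle=\rho$, and by A2, $\rho$ is the largest eigenvalue. Since the kernels are nonnegative, $\lambda_1(P_n)\ge0$, so the zero eigenvalue on the complement of the step functions does not affect the supremum.
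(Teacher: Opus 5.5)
Your proposal is correct and follows the paper's proof. Both arguments identify $P_n/n$ (resp.\ $P_n/(n\varepsilon_n)$) with the step-kernel operator $T_{W_n}$ so that the nonzero spectra coincide, bound $|\lambda_1(T_{W_n})-\rho|$ by $\|T_{W_n}-T_W\|$ using uniform continuity of $W$ (plus Glivenko--Cantelli for the order statistics in Model B), and absorb $\lambda_1-\lambda_1(P_n)$ via Lemma~\ref{ConcentrationAroundMean} and Weyl's inequality. Your Hilbert--Schmidt bound and explicit handling of the zero diagonal slightly tidy the paper's sup-norm estimate, which implicitly takes $(P_n)_{ii}=W(i/n,i/n)$ even though $\mathbb{E}A_{ii}=0$.
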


\begin{proof}
For \textbf{Model A}, let $T_{W_n}$ be the integral operator on $L^2(0,1)$ with kernel,
$$W_n(x,y) = W\left(\frac{\lceil nx\rceil}{n},\frac{\lceil ny \rceil}{n}\right).$$
We will first show that the non-zero spectra of $T_{W_n}$ and $P_n/n$ are identical and then  as $n \to \infty$,
\[
\|T_{W_n} - T_W\|_{op} \to 0,
\]from which the lemma will follow. Note that, as $W \not\equiv 0$ and continuous,  $\lambda_1(P_n/n), \lambda_1(T_{W_n})$ and $\lambda_1(T_W)$ are all $>0$.
We define two operators 
$$S_n^1 : \{x_k\}_{k=1}^{n} \mapsto \sqrt{n}\sum_{k=1}^{n}x_kI_{I_k},$$ and
$$S_n^2: f\mapsto \left\{\sqrt{n}\int_{I_k}f(x)dx\right\}_{k=1}^{n},$$ where
$I_k=[\frac{k-1}{n},\frac{k}{n}],k=1,2,\ldots n.$
Then, as done in \cite{bottcher2007norms}, we get 
\begin{align}
\label{ForSpectrumEquality2}S_n^1P_nS_n^2 = nT_{W_n}.
\end{align} 
Let $\alpha \neq 0$ be an eigenvalue of $S_n^1P_nS_n^2$, and let $y$ be a corresponding eigenvector. Then
\[
S_n^1P_nS_n^2 y=\alpha y .
\]
Applying \(S_n^2\) to both sides gives
\[
S_n^2S_n^1P_nS_n^2 y=\alpha S_n^2y .
\]
Since \(S_n^2S_n^1\) is the identity operator on \(\mathbb C^n\), it follows that
\[
P_nS_n^2y=\alpha S_n^2y .
\]
Moreover, \(S_n^2y\neq 0\), because otherwise the last display would imply
\[
0=\alpha S_n^2y=S_n^2S_n^1P_nS_n^2y,
\]
and, since \(\alpha\neq0\), this would contradict \(y\neq0\) in the original eigenvalue equation. Hence \(\alpha\) is an eigenvalue of \(P_n\), with eigenvector \(S_n^2y\).
Thus any eigenvalue of $S_n^1P_nS_n^2$ is an eigenvalue of $P_n$.
Now suppose $\beta$ is an eigenvalue of $P_n$ with eigenvector $z.$ Then, note that,
\begin{align*}
(S_n^1P_nS_n^2)(S_n^1z) &= S_n^1P_nS_n^2S_n^1z,\\
& = S_n^1P_nz = \beta S_n^1z.
\end{align*}Thus $\beta$ is an eigenvalue of $S_n^1P_nS_n^2$ with eigenvector $S_n^1z.$ 
Equation \eqref{ForSpectrumEquality2} now implies that the non-zero spectra are the same and consequently, the largest eigenvalues of $T_{W_n}$ and $P_n/n$ are identical. An application of Weyl's inequality and Lemma \ref{ConcentrationAroundMean} gives
\begin{align}
\nonumber \left|\frac{\lambda_1}{n} - \lambda_1(T_{W_n}) \right|
&\le \left|\frac{\lambda_1}{n} - \frac{\lambda_1(P_n)}{n} \right| + \left|\frac{\lambda_1(P_n)}{n} - \lambda_1(T_{W_n}) \right|,\\
\nonumber & = \frac{1}{n}\left|\lambda_1 - \lambda_1(P_n) \right|,\\
\nonumber & \le \frac{1}{n}\|A_n-P_n\|,\\
\label{EFPAStep3Part1}& = O_p\left(\frac{1}{\sqrt{n}}\right).
\end{align}
We now look at $\left|\lambda_1(T_{W_n}) - \lambda_1(T_W)\right| = \left|\lambda_1(T_{W_n}) - \rho\right|$. As integral operators with symmetric kernels are self-adjoint and compact, again by Weyl's inequality, we get 
\[
\left|\lambda_1(T_{W_n}) - \lambda_1(T_W)\right| \le \left\|T_{W_n} - T_W\right\|_{op}.
\]We will show that the $\left |W\left(\frac{\lceil nx \rceil}{n},\frac{\lceil ny \rceil}{n}\right) -W(x,y)\right|$ converges to zero uniformly, which will imply that the RHS above goes to zero.
Recalling that the modulus of continuity $\omega$ of $W$ is non-decreasing and using the facts that $\|.\|_{l_2} \le \|.\|_{l_1}$ we get 
\begin{align}
\nonumber\left |W\left(\frac{\lceil nx \rceil}{n},\frac{\lceil ny \rceil}{n}\right) -W(x,y)\right| &\le \omega\left(\left\|\left(\frac{\lceil nx \rceil}{n},\frac{\lceil ny \rceil}{n}\right) - (x,y)\right\|_{l_2}\right),\\
\nonumber& \le \omega\left(\left\|\left(\frac{\lceil nx \rceil}{n},\frac{\lceil ny \rceil}{n}\right) - (x,y)\right\|_{l_1}\right),\\
\nonumber& = \omega\left( \left|\frac{\lceil nx \rceil}{n} -x\right| + \left|\frac{\lceil ny \rceil}{n} -y\right|\right),\\
\label{EFPAStep3Part2}& \le \omega\left(\frac{2}{n}\right).
\end{align}
Since the above is true $\forall (x,y) \in [0,1]^2$ we get the desired uniform convergence. Now, equations \eqref{EFPAStep3Part1} and \eqref{EFPAStep3Part2} together complete the proof for \textbf{Model A}. \\
For \textbf{Model B} , due to the additional randomization in the setup, we consider a sequence of random operators $T_{W_n}$(with a little abuse of notation we continue using the same symbols as in \textbf{Model A}) with kernel,
$$W_n(x,y) =  W\left(X_{(\lceil nx \rceil)},X_{(\lceil ny\rceil)}\right).$$ Since we are dealing with random operators, we fix a sample point here. Going along the arguments for \textbf{Model A}, it follows that the non-zero spectra of $T_{W_n}$ and $P_n/n$ are identical. Consequently,
\[
\frac{\lambda_1(P_n)}{n\varepsilon} = \frac{\lambda_1(T_{W_n})}{\varepsilon}.
\]
Again as done in achieving \eqref{EFPAStep3Part1} in the proof of \textbf{Model A}  by Weyl's inequality and Lemma \ref{ConcentrationAroundMean}, we get
\begin{align}
\label{EFPBStep3Part1} \left|\frac{\lambda_1}{n\varepsilon_n} - \lambda_1(T_{W_n}) \right|
 & \le \frac{1}{n\varepsilon_n}\left|\lambda_1 - \lambda_1(P_n) \right|
\le \frac{1}{n\varepsilon_n}\|A_n-P_n\|
 = O_p\left(\frac{1}{\sqrt{n\varepsilon_n}}\right).
\end{align}
We now look at $\left|{\lambda_1(T_{W_n})} - \lambda_1(T_W)\right| = \left|\frac{\lambda_1(T_{W_n})}{\varepsilon} - \rho\right|$. Again along the lines of the proof of \textbf{Model A} we have by Weyl's inequality, 
\[
\left|\frac{\lambda_1(T_{W_n})}{\varepsilon} - \lambda_1(T_W)\right| \le \left\|{T_{W_n}} - T_W\right\|_{op}.
\]
Using the modulus of continuity $\omega$ of $W$ again along with the fact that $\|.\|_{l_2} \le \|.\|_{l_1}$ we get as in \textbf{Model A},
\begin{align*}
&\left |W\left(X_{(\lceil nx \rceil)},X_{(\lceil ny\rceil)}\right) -W(x,y)\right|,\\
& \le \omega\left(\left|X_{(\lceil nx \rceil)}-\frac{\lceil nx \rceil}{n}\right| + \left|X_{(\lceil ny \rceil)}-\frac{\lceil ny \rceil}{n}\right| + \left|\frac{\lceil nx \rceil}{n} -x\right| + \left|\frac{\lceil ny \rceil}{n} -y\right|\right),\\
& \le \omega\left(2\max_i\left|X_{(i)}-\frac{i}{n}\right| + \frac{2}{n}\right).
\end{align*}
Since the above is true $\forall (x,y) \in [0,1]^2$, using \eqref{EFPBStep1OrderStats} we get as $n \to \infty,$ 
\begin{align}
\label{EFPBStep3Part2}\left\|{T_{W_n}} - T_W\right\|_{op}^2 \le \|W_n-W\|_\infty^2 \le \omega^2\left(2\max_i\left|X_{(i)}-\frac{i}{n}\right| + \frac{2}{n}\right) \overset{a.s.}{\to}0. 
\end{align}
Now, equations \eqref{EFPBStep3Part1} and \eqref{EFPBStep3Part2} together complete the proof for \textbf{Model B}. \\
\end{proof}
We now proceed towards the penultimate step of the proof.
\begin{lemma}\label{Part1}[Proof of \bf Part 1]
As $n \to \infty$, we have 
\[
\int_0^1 \left|\frac{\lambda_1}{d_{\lceil nx\rceil}}-\frac{\rho}{K(x)}\right|^2dx \overset{p}{\to} 0.
\]
\end{lemma}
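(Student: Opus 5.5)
The plan is to write the ratio $\lambda_1/d_{\lceil nx\rceil}$ as $(\lambda_1/n)\big/(d_{\lceil nx\rceil}/n)$ for Model A, and with $n\varepsilon_n$ in place of $n$ for Model B. The two pieces are then controlled separately: Lemma \ref{LargestEigenvalueConvergence} handles the numerator and Lemma \ref{DegMaxDev} handles the denominator. Throughout, $\lambda_1/d_i$ is read as $\frac{\lambda_1/n}{d_i/n}$ and we work on a high-probability event. The argument for Model B is identical after the substitution $n \mapsto n\varepsilon_n$, so I describe Model A only.

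\emph{Step 1 (uniform lower bound on degrees).} By Assumption A1 and Lemma \ref{DegMaxDev}, the event
\[
E_n=\Big\{\max_{1\le i\le n}\Big|\tfrac{d_i}{n}-K(\tfrac in)\Big|\le k_{min}/2\Big\}
\]
has probability tending to one. On $E_n$ we have $d_i/n\ge k_{min}/2$ for every $i$. In particular no degree is zero, so the expression in the statement is well defined and bounded on $E_n$.

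\emph{Step 2 (pointwise decomposition).} On $E_n$, for $x\in(0,1]$ write $i=\lceil nx\rceil$. Then
\[
\Big|\frac{\lambda_1}{d_i}-\frac{\rho}{K(x)}\Big|\le \frac{|\lambda_1/n-\rho|}{d_i/n}+\rho\,\frac{|K(x)-d_i/n|}{(d_i/n)K(x)}\le \frac{2}{k_{min}}\Big|\frac{\lambda_1}{n}-\rho\Big|+\frac{2\rho}{k_{min}^2}\Big|\frac{d_i}{n}-K(x)\Big|.
\]
Next split $|d_i/n-K(x)|\le \max_j|d_j/n-K(j/n)|+|K(\lceil nx\rceil/n)-K(x)|$. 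The last term is at most $\omega_K(1/n)$ by uniform continuity of $K$, exactly as in Remark \ref{DegDevL1}.

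\emph{Step 3 (conclusion).} The bound in Step 2 is uniform in $x$, so on $E_n$ the integral in the statement is at most $\sup_x$ of the squared bound. That supremum tends to $0$ in probability by Lemma \ref{LargestEigenvalueConvergence}, Lemma \ref{DegMaxDev}, and $\omega_K(1/n)\to0$. Since $\mathbb P(E_n^c)\to0$, convergence in probability follows.

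For Model B the only change is $K(i/n)$, which already appears in Lemma \ref{DegMaxDev} for that model. The scaling is with $n\varepsilon_n$, and the $\varepsilon_n$ factors cancel in the ratio.

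The main obstacle is Step 1: controlling $1/d_i$ uniformly. An $L^1$ statement like Remark \ref{DegDevL1} would not suffice because the map $t\mapsto 1/t$ blows up near $0$. This is why the maximal deviation bound of Lemma \ref{DegMaxDev}, combined with Assumption A1, is essential.
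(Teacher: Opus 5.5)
Your proof is correct, but it handles the integrand differently from the paper. The paper expands the square and shows separately that $\int_0^1\lambda_1^2 d_{\lceil nx\rceil}^{-2}\,dx$ and $\lambda_1\rho\int_0^1 (d_{\lceil nx\rceil}K(x))^{-1}\,dx$ both converge to $\rho^2\int_0^1 K(x)^{-2}\,dx$. It works on the same lower-bound event as your Step 1, obtained from Lemma \ref{DegMaxDev} and A1. It then uses the $L^1$ convergence of Remark \ref{DegDevL1} rather than a uniform bound.

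You instead apply the triangle inequality directly to $\lambda_1/d_i-\rho/K(x)$. This gives a bound uniform in $x$, and hence the stronger conclusion $\sup_{x\in(0,1]}|\lambda_1/d_{\lceil nx\rceil}-\rho/K(x)|\to0$ in probability. Your splitting also needs only a lower bound on the degrees. The paper's factorization $|n^2/d^2-K^{-2}|=(K+d/n)\,|K-d/n|/((d/n)^2K^2)$, with $d=d_{\lceil nx\rceil}$, needs an upper bound on $d_i/(n\varepsilon_n)$ in Model B. That is why the paper introduces the extra event $H_n$, whereas your argument transfers to Model B with no extra work.

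In the other direction, the paper's route uses only $\int_0^1|K(\lceil nx\rceil/n)-K(x)|\,dx\to0$ (dominated convergence) instead of the modulus $\omega_K(1/n)$. So that step tolerates discontinuities of $K$. With $W$ continuous, which Lemma \ref{DegMaxDev} needs anyway, this makes no difference.

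On your closing remark: the maximal deviation bound is needed only to produce the lower-bound event. On that event, $t\mapsto 1/t$ is bounded and Lipschitz on $[k_{min}/2,\infty)$, so $L^1$ control of the degrees is enough. That is exactly how the paper proceeds.
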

\begin{proof}
We break down the integral in the usual way as follows:
\begin{align*}
\int_0^1 \left|\frac{\lambda_1}{d_{\lceil nx\rceil}}-\frac{\rho}{K(x)}\right|^2dx = \int_0^1 \frac{\lambda_1^2}{d_{\lceil nx\rceil}^2}dx - 2\lambda_1\rho\int_0^1\frac{1}{d_{\lceil nx\rceil}K(x)}dx+ \rho^2\int_0^1\frac{1}{K^2(x)}dx.
\end{align*}
As before, we first look at \textbf{Model A}. We define the event $E_n$ as 
\[
E_n: = \left\{\min_i \frac{d_i}{n} > \frac{K_{min}}{2}\right\}.
\]
Let $\tilde{i} = \operatorname{arg}\min_i d_i.$ Then on $E_n^c$ we have 
\begin{align*}
\max_i\left|\frac{d_i}{n}-K\left(\frac{i}{n}\right)\right| &\ge \left|\frac{d_{\tilde{i}}}{n}-K\left(\frac{\tilde{i}}{n}\right)\right|,\\
& \ge K\left(\frac{\tilde{i}}{n}\right)-\frac{d_{\tilde{i}}}{n},\\
& > K_{min} - \frac{K_{min}}{2}=\frac{K_{min}}{2}.
\end{align*}Hence, by Lemma \ref{DegMaxDev} we obtain as $n \to \infty$,
\[
\mathbb{P}(E_n^c) \to 0.
\]Now we note that as $\frac{d_{\lceil nx\rceil}}{n},K(.) \le 1$, on $E_n$ we have using assumption $A1$,
\begin{align*}
\left|\frac{n^2}{d_{\lceil nx\rceil}^2} - \frac{1}{K^2(x)}\right|
&= \left|\frac{\left(K(x)+\frac{d_{\lceil nx\rceil}}{n}\right)\left(K(x)-\frac{d_{\lceil nx\rceil}}{n}\right)}{\left(\frac{d_{\lceil nx\rceil}}{n}\right)^2 K^2(x)}\right|,\\
&\le \frac{8 \left|K(x)-\frac{d_{\lceil nx\rceil}}{n}\right|}{K_{min}^4}.
\end{align*}Using this for every $\delta>0$, we obtain, as $n \to \infty,$

\begin{align*}
&\mathbb{P}\left(\int_0^1\left|\frac{n^2}{d_{\lceil nx\rceil}^2} - \frac{1}{K^2(x)}\right|dx>\delta\right),\\
\le & \mathbb{P}\left(\int_0^1\left|\frac{n^2}{d_{\lceil nx\rceil}^2} - \frac{1}{K^2(x)}\right|dx>\delta \cap E_n\right) + \mathbb{P}(E_n^c),\\
\le & \mathbb{P}\left(\int_0^1\left|\frac{d_{\lceil nx\rceil}}{n}-K(x)\right|dx>\frac{\delta K_{min}^4}{4} \right) + \mathbb{P}(E_n^c) \to 0,\\
\end{align*}where the last line follows using the fact that $\mathbb{P}(E_n^c) \to 0$ and Remark \ref{DegDevL1}.
Using Lemma \ref{LargestEigenvalueConvergence} we conclude that 
\begin{align}
\label{EFPAPart1a}\int_0^1 \frac{\lambda_1^2}{d_{\lceil nx\rceil}^2}dx \overset{p}{\to} \rho^2\int_0^1\frac{1}{K^2(x)}dx.
\end{align}
A similar argument gives for $\delta >0$,
\begin{align*}
& \mathbb{P}\left(\int_0^1 \frac{1}{K(x)}\left|\frac{1}{d_{\lceil nx\rceil}/n}-\frac{1}{K(x)}\right|dx>\delta\right)\\ 
\le & \mathbb{P}\left(\int_0^1\left|\frac{d_{\lceil nx\rceil}}{n}-K(x)\right|dx>\frac{\delta K_{min}^3}{2} \right) + \mathbb{P}(E_n^c)\to 0,
\end{align*}and hence we get
\begin{align}
\label{EFPAPart1b} 2\lambda_1\rho\int_0^1\frac{1}{d_{\lceil nx \rceil}K(x)}dx \overset{p}{\to } 2\rho^2\int_0^1\frac{1}{K^2(x)}dx.
\end{align}
Equations \eqref{EFPAPart1a} and \eqref{EFPAPart1b} now complete the proof for \textbf{Model A}.   
For \textbf{Model B} we define the sequence of events $G_n$ and $H_n$ as 
\[
G_n = \left[\min_i \frac{d_i}{n\varepsilon_n} > \frac{K_{min}}{2}\right],
\]
and
\[
H_n = \left[\max_i \frac{d_i}{n\varepsilon_n} < \frac{3}{2}\right],
\]
The exact same argument for $E_n$ in \textbf{Model A} continues verbatim and gives 
\[
\mathbb{P}(G_n^c) \to 0. 
\]
Similarly, using the fact that $|K|\le 1,$ we get
\[
\max_i \frac{d_i}{n\varepsilon_n} \le 1+ \max_i\left|\frac{d_i}{n\varepsilon_n} - K\left(\frac{i}{n}\right)\right|,
\] and hence,
\begin{align*}
\mathbb{P}(H_n^c) =\mathbb{P}\left[\max_i \frac{d_i}{n\varepsilon_n} \ge \frac{3}{2}\right] \le \mathbb{P}\left(\max_i\left|\frac{d_i}{n\varepsilon_n} - K\left(\frac{i}{n}\right)\right| \ge \frac{1}{2}\right) \to 0,
\end{align*}where the last implication follows again by Lemma \ref{DegMaxDev}.

Now on $G_n \cap H_n,$ we have using $|K|\le 1$ and assumption $A1$,  
\begin{align*}
\left|\frac{(n\varepsilon_n)^2}{d_{\lceil nx\rceil}^2} - \frac{1}{K^2(x)}\right|
&= \left|\frac{\left(K(x)+\frac{d_{\lceil nx\rceil}}{n\varepsilon_n}\right)\left(K(x)-\frac{d_{\lceil nx\rceil}}{n\varepsilon_n}\right)}{\left(\frac{d_{\lceil nx\rceil^2}}{(n\varepsilon_n)}\right)^@K^2(x)}\right|
\le \frac{10 \left|\frac{d_{\lceil nx\rceil}}{n} - K(x)\right|}{K_{min}^4}.
\end{align*}Thus, once again as in the proof of \textbf{Model A}, for $\delta>0$, we have as $n \to \infty,$

\begin{align*}
\mathbb{P}\left(\int_0^1\left|\frac{(n\varepsilon_n)^2}{d_{\lceil nx\rceil}^2} - \frac{1}{K^2(x)}\right|dx>\delta\right)
\le & \mathbb{P}\left(\int_0^1\left|\frac{d_{\lceil nx\rceil}}{n\varepsilon_n}-K(x)\right|dx>\frac{\delta K_{min}^4}{10} \right) + \mathbb{P}(G_n^c \cup H_n^c)\to 0,
\end{align*}where the last line follows using Remark \ref{DegDevL1} and the fact that $\mathbb{P}(G_n^c \cup H_n^c) \to 0$.
Using Lemma \ref{LargestEigenvalueConvergence} again, we conclude that 
\begin{align}
\label{EFPBSPart1a}\int_0^1 \frac{\lambda_1^2}{d_{\lceil nx\rceil}^2}dx \overset{p}{\to} \rho^2\int_0^1\frac{1}{K^2(x)}dx.
\end{align}
A similar argument as in \textbf{Model A} gives for $\delta >0$,
\begin{align*}
\mathbb{P}\left(\int_0^1 \frac{1}{K(x)}\left|\frac{1}{d_{\lceil nx\rceil}/n\varepsilon_n}-\frac{1}{K(x)}\right|dx>\delta\right)\to 0,
\end{align*}and hence
\begin{align}
\label{EFPBPart1b} 2\lambda_1\rho\int_0^1\frac{1}{d_{\lceil nx \rceil}K(x)}dx \overset{p}{\to } 2\rho^2\int_0^1\frac{1}{K^2(x)}dx.
\end{align}
Finally, equations \eqref{EFPBSPart1a} and \eqref{EFPBPart1b} complete the proof for \textbf{Model B}.
\end{proof}

\begin{lemma}\label{Part2}[Proof of Part 2]
Recall the definition of the sequence of step functions $r_n$:
\[
r_n(x)=r_{\lceil nx \rceil}^{(n)}\,,\quad x\in[0,1].
\]
Then as  $n \to \infty$,
\[
r_n\overset{p}{\to} \phi \quad \text{in }L^2[0,1].
\]
\end{lemma}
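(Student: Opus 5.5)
Our plan is a Davis--Kahan argument in $L^2[0,1]$, transporting the eigenvector of $A_n$ into function space through the embedding $S_n^1$ from the proof of Lemma \ref{LargestEigenvalueConvergence}. Note that $r_n = n^{-1/2}S_n^1 r^{(n)}$. Since $n^{-1}\sum_i (r_i^{(n)})^2=1$, we have $\|r_n\|_2=1$, and $r_n$ is nonnegative. We treat Model A; for Model B the same steps apply with $n$ replaced by $n\varepsilon_n$ and $T_{W_n}$ the random-kernel operator, and everything holds conditionally on $\mathbb X$ on an event of probability tending to one.

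\textbf{Step 1 (perturbation in operator form).} Define the finite-rank self-adjoint operator $\widehat T_n = n^{-1}S_n^1 A_n S_n^2$ on $L^2[0,1]$. By \eqref{ForSpectrumEquality2}, $n^{-1}S_n^1P_nS_n^2=T_{W_n}$. Since $S_n^1$ is an isometry from $\mathbb R^n$ and $S_n^2$ is its adjoint, we get
\[
\|\widehat T_n - T_W\|_{op}\le \frac{\|A_n-P_n\|}{n}+\|T_{W_n}-T_W\|_{op}\overset{p}{\to}0,
\]
using Lemma \ref{ConcentrationAroundMean} and \eqref{EFPAStep3Part2}. The argument in Lemma \ref{LargestEigenvalueConvergence} shows that the nonzero spectra of $\widehat T_n$ and $A_n/n$ coincide, with eigenvectors related by $S_n^1$. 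Hence $r_n$ is a unit eigenfunction of $\widehat T_n$ for its top eigenvalue $\lambda_1/n$, which tends to $\rho$ in probability.

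\textbf{Step 2 (Davis--Kahan).} By Assumption A2 and Weyl's inequality, with probability tending to one the rest of the spectrum of $\widehat T_n$ lies at distance at least $\gamma/2$ from $\lambda_1/n$. To see this, note that eigenvalues of $\widehat T_n$ other than the top one are within $\|\widehat T_n-T_W\|$ of $\sigma(T_W)\setminus\{\rho\}$; this uses the min--max characterization, since the second eigenvalue of $\widehat T_n$ is at most $\lambda_2(T_W)+o_p(1)$. The Davis--Kahan $\sin\Theta$ theorem, applied to the rank-one spectral projections of $T_W$ and $\widehat T_n$, then gives
\[
\min_{s=\pm1}\|r_n - s\phi\|_2\le \frac{2\sqrt2\,\|\widehat T_n-T_W\|_{op}}{\gamma/2}\overset{p}{\to}0.
\]
A concrete way to carry this out is to write $r_n=a\phi+g$ with $g\perp\phi$. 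Then $(\widehat T_n-\lambda_1/n)r_n=0$ yields $\|(T_W-\lambda_1/n)g\|\le 2\|\widehat T_n-T_W\|$, and on $\phi^\perp$ the operator $T_W-\lambda_1/n$ is invertible with inverse norm at most $2/\gamma$.

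\textbf{Step 3 (sign fixing).} Both $r_n\ge0$ and $\phi>0$ hold by A3. Hence $\langle r_n,\phi\rangle\ge0$, which gives $\|r_n-\phi\|_2^2=2-2\langle r_n,\phi\rangle\le\|r_n+\phi\|_2^2$. The minimum in Step 2 is therefore attained at $s=1$, and $\|r_n-\phi\|_2\overset{p}{\to}0$.

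The main obstacle is Step 1, the rigorous identification of $r_n$ as the top eigenfunction of $\widehat T_n$, together with the uniform spectral-gap control for random operators (especially in Model B, where the kernel is random). We would handle this by working on the high-probability event from Lemma \ref{ConcentrationAroundMean}, intersected with the almost-sure event in \eqref{EFPBStep3Part2}. Connectivity is used only through Perron--Frobenius nonnegativity; even without it, choosing $r^{(n)}\ge0$ suffices for Step 3.
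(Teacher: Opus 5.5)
Your proposal is correct and follows the paper's own route. You identify $r_n$ as the top eigenfunction of $n^{-1}S_n^1A_nS_n^2$ (the paper's $T_{A_n}$), bound $\|T_{A_n}-T_W\|$ by $\|A_n-P_n\|/n+\|T_{W_n}-T_W\|$, and apply a Davis--Kahan bound on the event that the top eigenvalue is within $\gamma/2$ of $\rho$. The only differences are that you prove the $\sin\Theta$ bound directly instead of citing Lemma 7 of Avella-Medina et al., and you fix the sign explicitly using $r_n\ge0$ and $\phi>0$, which the paper leaves implicit; this is a slightly more self-contained version of the same step, at the cost of a larger constant.
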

\begin{proof}
We fix a sample point. Let us denote the random integral operator corresponding to the adjacency matrix $A_n$ by $T_{A_n}$ on $L^2[0,1]$ with kernel 
\[
A_n\left(\lceil nx \rceil, \lceil ny \rceil\right),
\]
similar to Lemma 2.2 in \cite{bottcher2007norms}. Using the operators $S_n^1$ and $S_n^2$ defined in Lemma \ref{LargestEigenvalueConvergence}, we have 
\begin{align}
\label{ForSpectrumEqualityRandom2}S_n^1A_nS_n^2=nT_{A_n}.
\end{align}
Following the analysis done in Lemma \ref{LargestEigenvalueConvergence} and using \eqref{ForSpectrumEqualityRandom2}, we get 
\begin{align*}
& S_n^1A_nS_n^2(S_n^1r)=\lambda_1(S_n^1r),\\
\implies& nT_{A_n}(S_n^1r)=\lambda_1(S_n^1r),
\end{align*}
i.e. $\lambda_1$ is an eigenvalue of the operator $S_n^1A_nS_n^2$ with eigenfunction $S_n^1r.$
By definition of $S_n^1$, we have,$$\frac{S_n^1 r}{\sqrt{n}} = r_n.$$
This implies that $r_n$ is an eigenfunction of $nT_{A_n}$ corresponding to the largest eigenvalue $\lambda_1.$ By definition of eigenvalues and eigenfunctions, $r_n$ continues to be an eigenfunction of $T_{A_n}$ corresponding to its largest eigenvalue $\frac{\lambda_1}{n}$.
Recall Assumption A2, which gives a spectral gap at $\rho=\lambda_1(T_W)$ of at least $\gamma$. This implies, on the event $|\lambda_1(T_{A_n})-\rho|<\gamma/2$, the hypotheses of Lemma 7 in \cite{avella2018centrality}, a version of the Davis--Kahan theorem, are satisfied.\\ 
Therefore, on the event $\left|\lambda_1\left(T_
{A_n}\right)-\lambda_1(T_W)\right| < \frac{\gamma}{2},$ we have 
\begin{align*}
\|r_n -\phi\|_2 &\le \frac{\sqrt{2}\|T_{A_n} - T_W\|}{\gamma -\left|\lambda_1\left(T_
{A_n}\right)-\lambda_1(T_W)\right| },\\
& \le \frac{2\sqrt{2}\|T_{A_n} - T_W\|}{\gamma}.
\end{align*}
Hence, for $\eta >0,$
\begin{align*}
&\mathbb{P}(\|r_n - \phi\|_2 > \eta),\\
\le & \mathbb{P}\left(\|r_n - \phi\|_2 > \eta,\left|\lambda_1\left(T_
{A_n}\right)-\lambda_1(T_W)\right| < \frac{\gamma}{2} \right) + \mathbb{P}\left(\left|\lambda_1\left(T_
{A_n}\right)-\lambda_1(T_W)\right| \ge \frac{\gamma}{2}\right),\\
\le & \mathbb{P}\left(\frac{2\sqrt{2}\|T_{A_n} - T_W\|}{\gamma} > \eta\right) + \mathbb{P}\left(\left|\lambda_1\left(T_
{A_n}\right)-\lambda_1(T_W)\right| \ge \frac{\gamma}{2}\right),\\
\le & \mathbb{P}\left(\frac{2\sqrt{2}\|T_{A_n} - T_W\|}{\gamma} > \eta\right) + \mathbb{P}\left(\left\|T_{A_n} - T_W\right\| \ge \frac{\gamma}{2}\right).
\end{align*}
where the last line follows by Weyl's inequality as $T_{A_n}$ and $T_W$ are self-adjoint compact operators.
Now it remains to show that \[
\left\|T_{A_n} - T_W\right\| = o_p(1).
\]To see this, note that,
\begin{align*}
\left\|T_{A_n} - T_W\right\| \le \left\|T_{A_n} - T_{W_n}\right\|+\left\|T_{W_n} - T_W\right\|.
\end{align*}That the second term  in the rhs is $o_p(1)$ has been proved in Lemma \ref{LargestEigenvalueConvergence}. For the first term, we note that
\begin{align*}
\left\|T_{A_n} - T_{W_n}\right\| 
& = \frac{1}{n}\left\|A_n - P_n\right\|,\\
& = O_p\left(\frac{1}{\sqrt{n}}\right),
\end{align*}where the second last line follows using Lemma 2.2 from \cite{bottcher2007norms} and the last line follows from Lemma \ref{ConcentrationAroundMean}. This completes the proof for \textbf{Model A}. \\

For \textbf{Model B}, the same argument shows that $r_n$ is an eigenfunction of $T_{A_n}/\epsn$ corresponding to the largest eigenvalue $\lambda_1/(n\epsn)$. Applying the Davis--Kahan bound on the event
\[
\left|\lambda_1\left(T_{A_n}/\epsn\right)-\lambda_1(T_W)\right|<\frac{\gamma}{2}
\]
gives
\[
\|r_n-\phi\|_2\le \frac{2\sqrt2}{\gamma}\left\|\frac{T_{A_n}}{\epsn}-T_W\right\|.
\]
Finally,
\[
\left\|\frac{T_{A_n}}{\epsn}-T_W\right\|
\le \frac{1}{n\epsn}\|A_n-P_n\|+\|T_{W_n}-T_W\|=o_p(1),
\]
where the first term is handled by Lemma \ref{ConcentrationAroundMean} and the second by the operator convergence in the proof of Lemma \ref{LargestEigenvalueConvergence}. This completes the proof for \textbf{Model B}.

\end{proof}
\begin{proof}[Proof of \bf Theorem \ref{Th.ESDEFB}]

By Lemmas \ref{Part1} and \ref{Part2}, together with \eqref{MainFormulation}, we have
\[
\int_0^1\left|\Delta_n^I(x)-\left(\frac{\rho}{K(x)}-1\right)\phi(x)\right|\,dx \overset{p}{\to}0,
\]
where
\[
\Delta_n^I(x)=\sum_{i=1}^n \Delta_{i,n}^E\mathbf 1_{[(i-1)/n,i/n)}(x).
\]
Let $F$ be bounded and Lipschitz, with Lipschitz constant $L_F$. Then
\begin{align*}
\left|\frac1n\sum_{i=1}^nF(\Delta_{i,n}^E)-\int_0^1F\left(\left(\frac{\rho}{K(x)}-1\right)\phi(x)\right)dx\right|
&\le L_F\int_0^1\left|\Delta_n^I(x)-\left(\frac{\rho}{K(x)}-1\right)\phi(x)\right|dx\\
&\overset{p}{\to}0.
\end{align*}
This proves \eqref{TargetESDEFB} for every bounded Lipschitz test function. Since bounded Lipschitz functions determine weak convergence of probability measures on $\mathbb R$, the proof is complete for both \textbf{Model A} and \textbf{Model B}.
 
\end{proof}
\begin{remark}\textbf{Some properties of the limiting probability measure:} 
\begin{enumerate}
\item Let us consider the function 
\[
h(x) = \left(\frac{\rho}{K(x)}-1\right)\phi(x).
\]Then $\mu_\infty$ is the law $h(U)$, which means the push-forward of the Lebesgue measure on [0,1] under $h$.
\item Note that as long as $W$ is continuous on $[0,1]^2$, $T_W$ maps $L^2[0,1] \mapsto C[0,1].$ Then, by the definition of eigenvalues and vectors,
\[
\phi(x)=\frac{1}{\rho}\left(T_W \phi\right)(x),
\] $\phi(.)$ is also continuous on [0,1]. 
\item When $K>0$, $h$ is continuous on [0,1]. In that case, $h[0,1]$ is a bounded interval and, hence, in this case $\mu_\infty$ has a compact support.
\end{enumerate}
\end{remark}

\begin{remark}\label{EFP siginicance}\textbf{Significance of the EFP :}
\cite{HdHP2023} has defined the friendship paradox significance by the property 
$\mu_\infty[0,\infty) > 1/2,$ where $\mu_\infty$ in our case is the limiting measure, i.e. the law of $\Delta_\infty= \left(\frac{\rho}{K(U)}-1\right)\phi(U)$, as defined in Theorem \ref{Th.ESDEFB}. Recall from assumption $A3$ that we have $\phi(x) >0$ for all  $x\in[0,1]$. Then,
\begin{align*}
\mathbb{P} [\Delta_\infty \ge 0] & = \mathbb{P}\left(\left(\frac{\rho}{K(U)}-1\right)\phi(U) \ge 0\right)\\
& = \mathbb{P}\left(\frac{\rho}{K(U)}-1\ge 0\right),\\
& = 1 - \mathbb{P}(K(U)>\rho),\\
& \ge 1 - \frac{\int_0^1\int_0^1 W(x,y)dxdy}{\rho},
\end{align*}where the inequality in the second last line follows by Markov's inequality and the fact that $\mathbb{E}(K(U)) = \int_0^1\int_0^1 W(x,y)dxdy$. 
Thus, the EFP significance holds when 
\begin{align}
\label{EFPSignificance1}\int_0^1\int_0^1 W(x,y)dxdy < \rho/2.
\end{align}
\end{remark}

\section{Few Illustrative Examples}
\label{s.examples}
\subsection{d-regular graph:}
Note that when we look at the adjacency matrix of a d-regular graph, denoting the sum vector by $\zeta$, we immediately have,
\[
(A\zeta)_i = \sum_{j=1}^n A_{ij}=d,
\]and hence, 
\[
A\zeta=d\zeta.
\]
Thus $d$ is an eigenvalue of $A$ with $\zeta=(1,1,\ldots,1)$ being the corresponding eigenvector. Further, the fact that 
\[
\|A\| \le \max_i\sum_{j=1}^n A_{ij}=d,
\]we get that $d$ is the largest eigenvalue of $A$, with the normalized eigenvector $\zeta/\sqrt{n}$.
Hence for any vertex $V_i$ we have,
\[
\Delta_{i,n}^E = \frac{\lambda_1}{d_i}r_i-r_i=0,
\]implying that $\Delta_n^E =0,$ which means there is zero eigen bias and thus,\[\mu_\infty = \delta_{0}.\]
Clearly \[\mu_\infty[0,\infty) = 1 >1/2,\]
and hence the EFP significance holds trivially in this case.
\subsection{Rank 1 Graphon:} For the case when we have a rank-1 graphon i.e.,
\[
W(x,y) = r(x) r(y),
\]we have,
\[
K(x)=r(x)\int_0^1r(y)dy.
\]
By Assumption $A3$ theorem, we know that, $\rho >0$ and $\phi(x) >0$ for all $x\in [0,1].$ Then, from the eigenvalue equation we get, for all $x \in [0,1],$
\begin{align*}
\int_0^1W(x,y)\phi(y)dy &=\rho\phi(x),\\
r(x)\int_0^1r(y)\phi(y)dy&= \rho\phi(x),\\
\phi(x)&=Cr(x),
\end{align*}where 
\[
C = \frac{1}{\rho}\int_0^1r(y)\phi(y)dy.
\]Clearly then $C>0$ as $\phi(x) >0$ for all $x\in [0,1].$
Using Assumption $A3,$ we get 
\[
C= \frac{1}{\|r\|_2}.
\]
This implies that $\phi(x) = \frac{r(x)}{\|r\|_2}.$
Therefore, using the eigenvalue equation again, we get for all $x \in [0,1],$
\begin{align*}
C\int_0^1W(x,y)r(y)dy &=C\rho r(x),\\
\text{which implies}\,\,r(x)\int_0^1r^2(y)dy& = \rho r(x),\\
\text{and hence}\,\, \rho &= \int_0^1r^2(y)dy.
\end{align*}
\[
\Delta_\infty = \left[\frac{\int_0^1r^2(y)dy}{r(.)\int_0^1r(y)dy}-1\right]\frac{r(.)}{\|r\|_2}.
\]
Thus,
\begin{align*}
\mathbb{E}\Delta_\infty &= \frac{\int_0^1r^2(x)dx}{\|r\|_2\int_0^1r(x)dx}-\frac{\int_0^1r(x)dx}{\|r\|_2},\\
& = \frac{\int_0^1r^2(x)dx-\left(\int_0^1 r(x)dx\right)^2}{\|r\|_2\int_0^1r(y)dy}\ge 0,
\end{align*}where the last inequality follows from Cauchy-Schwarz inequality. In view of \eqref{EFPSignificance1} we get that the EFP significance holds for the rank-1 graphon when,
\begin{align}
\label{EFPSignificanceRank1} \int_0^1 r^2(x)dx  > 2\left(\int_0^1 r(x)dx\right)^2.   
\end{align}

\subsection{A two block graphon:}Let us consider the two block graphon defined as,
\[
W(x,y) =
\begin{cases}
P_{11}, & \mbox{if } (x,y) \in [0,\alpha] \times [0,\alpha],\\
P_{12}, & \mbox{if } (x,y) \in [0,\alpha] \times [\alpha,1] \cup [\alpha,1] \times [0,\alpha],\\
P_{22}, & \mbox{if } (x,y) \in [\alpha,1] \times [\alpha,1],
\end{cases}
\]
with $\alpha \in (0,1)$ and $P_{11},P_{22} \in [0,1]$, and $P_{12} \in (0,1]$ as \vrdg{summarized in} Figure \ref{fig:twoblockgraphon}.

\vspace{0.3cm}
\begin{figure}[htbp]
\centering
\begin{tikzpicture}[scale=0.7]
\draw (0,0)--(0,4)--(4,4)--(4,0)--(0,0);
\draw (1.5,0)--(1.5,4);
\draw (0,1.5)--(4,1.5);
\draw [<->] (0,-0.3) -- (1.5,-0.3) node[midway,below]{$\alpha$};
\draw [<->] (1.5,-0.3)-- (4,-0.3) node[midway,below]{$1-\alpha$};
\node at (0.75,0.75) {$P_{11}$};
\node at (2.75,2.75) {$P_{22}$};
\node at (0.75,2.75) {$P_{12}$};
\node at (2.75,0.75) {$P_{12}$};
\end{tikzpicture}
\caption{\small A two-block graphon.}
\label{fig:twoblockgraphon}
\end{figure}

Although this graphon is not continuous at the block boundary, the following finite-type computation is useful and can be justified by continuous approximations. Here, the degree functional takes a simplified $2-$valued form given by,
\[
K(x)=
\begin{cases}
P_{11}\alpha + P_{12}(1-\alpha), & \mbox{if } x \in [0,\alpha],\\
P_{12}\alpha + P_{22}(1-\alpha), & \mbox{if } x \in (\alpha,1].\\
\end{cases}
\]

In what follows, we will denote the above values of the degree functional by $K_1$ and $K_2.$ 
Since our graphon has the 2-block structure, it follows that the eigenfunction $\phi$ is also 2-valued, of the form, say
\begin{align}
\label{2blockEVecImplicit}\phi(x)=
\begin{cases}
u, & \mbox{if } x \in [0,\alpha],\\
v, & \mbox{if } x \in (\alpha,1].\\
\end{cases}    
\end{align}
From definition of $T_W$ we get, for $0\le x\le \alpha,$
\begin{align}
\label{2block1} \alpha P_{11}u + P_{12}(1-\alpha)v = \rho u,  
\end{align}and for $\alpha < x\le 1,$
\begin{align}
\label{2block2} \alpha P_{12}u + P_{22}(1-\alpha)v = \rho v.  
\end{align}
 A direct computation leads to,
\begin{align}
\label{2blockEValMax}\rho = \frac{\alpha P_{11}+(1-\alpha)P_{22}+\sqrt{(\alpha P_{11}-(1-\alpha)P_{22})^2+4\alpha(1-\alpha)P_{12}^2}}{2},
\end{align}and
\begin{align}
\label{2blockEVecMax}\phi(x) =
\frac{1}{\sqrt{\alpha+(1-\alpha)\left(\frac{\rho-\alpha P_{11}}{(1-\alpha)P_{12}}\right)^2}}\begin{cases}
1, & \mbox{if } x \in [0,\alpha],\\
\frac{\rho-\alpha P_{11}}{(1-\alpha)P_{12}}, & \mbox{if } x \in (\alpha,1].\\
\end{cases}
\end{align}
Using the implicit $2-$valued form for $\phi$ stated in \eqref{2blockEVecImplicit}, the expected eigenfriendship bias can be written as
\begin{align}
\label{2blockEEFB1}\int_0^1\left(\frac{\rho}{K(x)}-1\right)\phi(x)dx = 
\frac{\rho u-K_1u}{K_1}\alpha + \frac{\rho v-K_2v}{K_2}(1-\alpha).
\end{align}
Equation \eqref{2block1} implies that 
\begin{align*}
\rho u-K_1 u = (1-\alpha)P_{12}(v-u),
\end{align*}and equation \eqref{2block2} implies that,
\begin{align*}
\rho v-K_2 v=\alpha P_{12}(u-v).    
\end{align*}Thus \eqref{2blockEEFB1} simplifies as 
\begin{align}
\label{2blockEEFB2}\int_0^1\left(\frac{\rho}{K(x)}-1\right)\phi(x)dx = 
\alpha(1-\alpha)P_{12}(v-u)\left[\frac{K_2 - K_1}{K_1K_2}\right].
\end{align}If $v=u$ or $K_1=K_2$ then the above is 0, so we exclude those cases. The non-negativity of the EEFB will follow once we show that \[
\operatorname{sgn}(v-u)=\operatorname{sgn}(K_2-K_1).
\]
Subtracting \eqref{2block1} from \eqref{2block2} we get,
\begin{align*}
\rho(v-u)=v\left((1-\alpha)P_{22}-(1-\alpha)P_{12}\right) + u(\alpha P_{12}-\alpha P_{11}).
\end{align*}Substituting the values of $\alpha P_{11}$ and $(1-\alpha)P_{22}$ from the expressions for $K_1$ and $K_2$ we get,
\begin{align}
\nonumber\rho(v-u) &= v(K_2 - P_{12}) + u(P_{12}-K_1),\\ 
\nonumber &= vK_2-uK_1-P_{12}(v-u),\\
\nonumber \implies (\rho + P_{12})(v-u) &= K_2(v-u)+u(K_2-K_1),\\
\label{2blockEEFBsign}\implies (\rho + P_{12}-K_2)(v-u)&=u(K_2-K_1),
\end{align}thus the non-negativity of the EEFB will follow once it is shown that $\rho+P_{12}-K_2>0.$ 
To see this, from \eqref{2block2} we get,
\begin{align*}
\rho = \alpha P_{12}u/v + (1-\alpha) P_{22},   
\end{align*}and thus,
\begin{align*}
\rho + P_{12}-K_2 &= \alpha P_{12}u/v + (1-\alpha)P_{22}+ P_{12}-K_2,\\
& = \alpha P_{12}u/v + P_{12}(1-\alpha)>0,
\end{align*}
which is strictly positive since $\alpha\in(0,1)$, $P_{12}>0$, and $u/v>0$.

\bibliographystyle{apalike}
\bibliography{Bibliography}

\end{document}